\documentclass[leqno,final]{siamart171218} 
\usepackage{amsmath}
\usepackage{graphicx,color}
\usepackage{epstopdf}
\usepackage{amssymb}
\pagestyle{myheadings}

\setlength{\hoffset}{.7in}
\newtheorem{remark}{Remark}

\def\O{\Omega}
\def\D{\mathcal{D}}
\def\F{\mathcal{F}}
\def\P{\mathbb{P}}
\def\E{\mathbb{E}}
\def\R{\mathbb{R}}
\def\({\left(}
\def\){\right)}
\def\div{\mbox{div}}

\def\u{\hat{u}} 
\def\w{\hat{w}}
\newcommand{\Eb}[1]{\E \left[ {#1} \right]}

\begin{document}

\title{Fully Discrete Mixed Finite Element Methods for the Stochastic Cahn-Hilliard Equation with Gradient-type Multiplicative Noise}
\markboth{XIAOBING FENG AND YUKUN LI AND YI ZHANG}{FEM for a Stochastic Cahn-Hilliard Equation}

\author{
Xiaobing Feng\thanks{Department of Mathematics, The University of Tennessee,
Knoxville, TN 37996, U.S.A. ({\tt xfeng@math.utk.edu}). The work of this author was partially supported 
by the NSF grant DMS-1620168.}
\and
Yukun Li\thanks{Department of Mathematics, The Ohio State University, Columbus, OH 43210, U.S.A. ({\tt li.7907@osu.edu}). }
\and
Yi Zhang\thanks{Department of Mathematics and Statistics, The University of North Carolina at Greensboro, Greensboro, NC 27402, U.S.A. ({\tt y\_zhang7@uncg.edu}). }
}

\maketitle

\begin{abstract}
This paper develops and analyzes some fully discrete mixed finite element methods for the stochastic Cahn-Hilliard equation  
with gradient-type multiplicative noise that is white in time and correlated in space. 
The stochastic Cahn-Hilliard equation is formally derived as a phase field formulation of 
the stochastically perturbed Hele-Shaw flow. The main result of this paper is to prove 
strong convergence with optimal rates for the proposed mixed finite element methods. 
To overcome the difficulty caused by the low regularity in time of the solution to the 
stochastic Cahn-Hilliard equation, the H\"{o}lder continuity in time with respect to various 
norms for the stochastic PDE solution is established, and it plays a crucial role in the error analysis. 
Numerical experiments are also provided to validate the theoretical results and to study the impact of
noise on the Hele-Shaw flow as well as the interplay of the geometric evolution and gradient-type noise.
\end{abstract}

\begin{keywords}
Stochastic Cahn-Hilliard equation, stochastic Hele-Shaw flow, gradient-type multiplicative noise, 
phase transition, mixed  finite element methods, strong convergence
\end{keywords}

\begin{AMS}
65N12, 
65N15, 
65N30 
\end{AMS}

\section{Introduction}\label{sec:intro} 
We consider the following stochastic Cahn-Hilliard (SCH) problem:
\begin{align}
\label{eq:SCH}
du = \Bigl[ - \Delta \Big(\epsilon \Delta u - \frac{1}{\epsilon} f(u) \Big) \Bigr] dt + \delta \nabla u  \cdot X \circ dW_t & \quad \text{in} \ \D_T:= \D \times (0,T],\\
\label{eq:SCH:b}
\frac{\partial u}{\partial n} = \frac{\partial}{\partial n} \Big(\epsilon \Delta u - \frac{1}{\epsilon} f(u)\Big) = 0 & \quad \text{in} \ \partial \D_T:=\partial \D \times (0,T], \\
\label{eq:SCH:i}
u = u_0 & \quad \text{in} \ \D \times \{ 0 \},
\end{align}
where $\D \subset \mathbb{R}^d$ ($d = 2, 3$) is a bounded domain, $n$ stands for the unit outward normal to $\partial \D$,  
and $T>0$ is a fixed number.
$W_t$ denotes a standard real-valued Wiener process on a given filtered probability space $(\O, \F, \{ \F_t: t \geq 0 \}, \P)$, 
``$\circ$" refers to the Stratonovich interpretation of the stochastic integral. $X: \mathbb{R}^d \longrightarrow \mathbb{R}^d$ 
is a smooth {\em divergence-free} vector field defined on $\D$ satisfying $X \cdot n = 0$ on $\partial \D$. 

Moreover, $f=F'$,  the derivative of a smooth double equal well potential $F$ taking its global minimum zero at $\pm 1$. 
In this paper we focus on the following quartic potential density function: 
\begin{align}\label{eq:F}
F(u) = \frac{1}{4} (u^2 - 1)^2. 
\end{align}
We note that the Stratonovich SPDE \eqref{eq:SCH} can be equivalently rewritten as the following It\^{o} SPDE: 
\begin{align}\label{eq:SCH:Ito}
du = \left[ - \Delta \Big(\epsilon \Delta u - \frac{1}{\epsilon} f(u) \Big) + \frac{\delta^2}{2} \text{div} (B \nabla u) \right] dt   + \delta \nabla u  \cdot X dW_t,
\end{align}
where $B = X \otimes X \in  \R^{d \times d}$ with $B_{ij} = X_i X_j$ ($i,j = 1, ..., d$).

By introducing the so-called chemical potential $w:= -\epsilon \Delta u + \frac{1}{\epsilon} f(u)$, 
the above primal formulation of the SCH problem can be rewritten as the following mixed formulation:
\begin{align}
\label{Msch1:mIto}
du = \Bigl[  \Delta w + \frac{\delta^2}{2} \text{div} (B \nabla u) \Bigr] dt + \delta \nabla u \cdot X d W_t & \qquad \text{in} \ \D_T, \\
\label{Msch2:mIto}
w = -\epsilon \Delta u + \frac{1}{\epsilon} f(u) & \qquad \text{in} \ \D_T,\\
\label{Msch3:mIto}
\frac{\partial u}{\partial n} = \frac{\partial w}{\partial n} = 0 & \qquad \text{on} \ \partial \D_T, \\
\label{Msch4:mIto}
u = u_0 & \qquad \text{on} \ \D \times \{ 0 \}, 
\end{align}
which will be used to develop fully discrete finite element numerical methods in this paper. 

The deterministic Cahn-Hilliard equation (i.e., $\delta = 0$) was originally introduced 
in \cite{Cahn_Hilliard58} to describe complicated phase separation and coarsening phenomena in 
a melted alloy that is quenched to a temperature at which only two different concentration phases can exist stably.  
In the equation, $u$ represents the concentration of one of two metallic components of the alloy mixture, the small parameter $\epsilon > 0$ is called the interaction length. 
Note that in \eqref{Msch1:mIto}--\eqref{Msch2:mIto}, $t$ is the {\it fast time} representing $\frac{t}{\epsilon}$ 
in the original Cahn-Hilliard formulation. The existence of bistable states suggests that nonconvex energy is associated
with the equation (cf. \cite{ABC:1994, Du_Feng19, Cahn_Hilliard58}).
The Cahn-Hilliard equation is well-known also because it closely relates to a celebrated  
moving interface problem, namely the Hele-Shaw (or Mullins-Sekerka) problem/flow. It was proved 
in \cite{P1989,ABC:1994} that, as $\epsilon \searrow 0$, the chemical potential $w := -\epsilon \Delta u + \epsilon^{-1} f(u)$ 
tends to a limit, which, together with a free boundary $\Gamma:= \cup_{0 \leq t \leq T} (\Gamma_t \times \{t\})$, 
satisfies the following Hele-Shaw (or Mullins-Sekerka) problem: 
\begin{alignat}{2}\label{HS1}
\Delta w &= 0 \qquad && \text{in} \ \D \setminus \Gamma_t, \ t \in (0,T],\\
\label{HS2}
\frac{\partial w}{\partial n} &=0 \qquad && \text{on} \ \partial \D, \ t \in (0,T],\\
\label{HS3}
w &= \sigma \kappa \qquad && \text{on} \ \Gamma_t, \ t \in (0,T], \\
\label{HS4}
V_n &= \frac{1}{2} \left[ \frac{\partial w}{\partial n} \right]_{\Gamma_t} \qquad && \text{on} \ \Gamma_t, \ t \in (0,T],\\
\label{HS5}
\Gamma_0 &= \Gamma_{00} \qquad && \text{on} \ t=0,
\end{alignat}
where $\sigma = \int_{-1}^1 \sqrt{\frac{F(s)}{2}} \, ds$,
$\kappa$ and $V_n$ are the mean curvature and the outward normal velocity of the interface $\Gamma_t$, $n$ is the unit outward 
normal to either $\partial \D$ or $\Gamma_t$, $\left[ \frac{\partial w}{\partial n} \right]_{\Gamma_t} := \frac{\partial w^+}{\partial n} - \frac{\partial w^-}{\partial n}$, and $w^+$ and $w^-$ are respectively the restriction of $w$ in the 
exterior and interior of $\Gamma_t$ in $\D$. More details about the justification of the limit can be found in 
 \cite{ABC:1994,Chen96,Stoth96} and its numerical approximations in \cite{feng2016analysis,Feng_Prohl04,Feng_Prohl05,li2017error,wu2018analysis} 
 and in \cite{Du_Feng19}. 

In applications of the Hele-Shaw flow, uncertainty may arise and come from various sources
such as thermal fluctuation, impurities of the materials and the intrinsic instabilities of the deterministic evolutions. 
Therefore, the evolution of the flow/interface under influence of noise is of great importance in applications, 
it is necessary and interesting to consider stochastic effects, and to study the impact of noise
on its phase field models and solutions, especially on their long time behaviors. This then leads to considering 
the stochastic phase field models. However, how to incorporate noises correctly into phase field models 
is often a delicate issue. 

In this paper, we consider the following stochastically perturbed Hale-Shaw flow: 
\begin{align}\label{noise:m}
V_n = \frac{1}{2} \left[ \frac{\partial w}{\partial n} \right]_{\Gamma_t} + \delta \stackrel{\circ}{W}_t X \cdot n, 
\end{align}
where a white-in-time noise multiplied by a smooth spatial coefficient function $X$ is added to the normal velocity
of the interface $\Gamma_t$, and the parameter $\delta > 0$ represents the noise intensity. 
By an heuristic argument (see \cite{RW2013,Du_Feng19} for an analogous argument), we can formally show that 
equation \eqref{eq:SCH} is a phase field formulation of the above stochastic Hele-Shaw flow.  

It should be noted that there is another stochastic Cahn-Hilliard equation, called Cahn-Hilliard-Cook (CHC) equation, which has been extensively  studied in the literature, see 
 \cite{Cook1970,PD1996-SCH,BMW2008-CHC} for PDE analysis and 
 \cite{LM2011-CHC,KLM2011-CHC,KLM2014-CHC,FKLL2018} and the references therein for its  numerical approximations. However, the noise in the 
CHC equation is additive and the parameter $\epsilon=1$ in those works. Hence, there may have no connection between  the CHC equation and the above stochastic Hele-Shaw flow.
We also note that numerical approximations of various stochastic versions of 
the following  Allen-Cahn equation: 
$$u_t=\Delta u -\frac{1}{\varepsilon^2} f(u),$$
which is a closely related to the Cahn-Hilliard equation,  
have been extensively investigated in the literature \cite{KLL2015,KKL2007,liu2017wong,KLL2018,MP2018, FLZ2019}. 
Most of those works focused on either additive noise or function-type multiplicative noise. 
Recently, finite element approximations 
of the stochastic Allen-Cahn (SAC) equation with gradient-type multiplicative 
noise had been carried out by the authors in \cite{FLZ2017}.  This
SAC equation was derived as and partially proved to be a phase field formulation of the stochastic mean curvature flow \cite{KO1982,RW2013,yip2002stochastic}. 

The goal of this paper is to extend the work of \cite{FLZ2017} to the stochastic 
Cahn-Hilliard problem \eqref{eq:SCH}--\eqref{eq:SCH:i}. Specifically, we
shall develop and analyze a fully discrete mixed finite element method for this problem, and establish strong 
convergence with rates for the proposed mixed finite element method under the assumption that the strong solution $u(\cdot, t)$ 
of the underlying SPDE problem belongs  to $W^{s,\infty}(\D)$ for almost every $t \in [0,T]$ 
and it satisfies the following high moment estimates:  
\begin{align}\label{eq:reg}
\sup_{t \in [0,T]} \Eb{\| u \|^p_{W^{s,\infty}(\D)}} \leq C_0 = C(p, \delta, \epsilon) \qquad \forall \, p \geq 1, 
\end{align}
where $\Eb{\cdot}$ denotes the expectation operator. 
It turns out that the divergence-free property of $X$ plays a key role in our analysis, which guarantees the 
sample-wise mass conservation for the strong solution to problem \eqref{eq:SCH}--\eqref{eq:SCH:i}. Another key 
ingredient for the error analysis is  the H\"{o}lder continuity estimates for the strong solution. 
To the best of our knowledge, numerical analysis has yet been done 
for the  stochastic Cahn-Hilliard equation with gradient-type multiplicative noise 
in the literature. 

The rest of the paper is organized as follows. In Section~\ref{sec:pre}, we define the weak formulation for  
problem  \eqref{eq:SCH}--\eqref{eq:SCH:i} and derive  several H\"{o}lder continuity estimates for the strong 
solution of the SPDE problem. In Section~\ref{sec:fem}, a fully discretized mixed finite element method is formulated
 and properties of the discrete inverse Laplacian operator are presented, which will be utilized to establish the well-posedness and stability of the discrete method, 
and to prove the strong convergence with rates in Section~\ref{sec:anal}. Finally, in Section~\ref{sec:numer}, we report several 
 numerical experiments to validate our theoretical results 
and to examine the interplay of the geometric parameter $\epsilon$ and the noise intensity $\delta$.  

Throughout this paper we shall use $C$ to denote a generic positive constant independent of the parameters 
$\epsilon$, $\delta$, space and time mesh sizes $h$ and $\tau$, which can take different values at different occurrences. 

\section{Preliminaries}\label{sec:pre}
Standard functional space and function notation in \cite{Adams03,BS2008} will be adopted in this paper. 
In particular,  $H^k(\D)$ for $k \geq 0$ denotes the Sobolev space of order $k$, $(\cdot, \cdot)$ and 
$\| \cdot \|_{L^2(\D)}$ denote the standard inner product and norm of $L^2(\D)$. 

In this section, we shall establish several technical lemmas about H\"{o}lder continuity estimates for the strong solution of  
problem  \eqref{eq:SCH}--\eqref{eq:SCH:i} 
that play a key role in error analysis in Section~\ref{sec:anal}. These estimates play the role of the time derivatives of 
the solution in the deterministic case.

First, we define the weak formulation for problem  \eqref{eq:SCH}--\eqref{eq:SCH:i},  based on the 
mixed formulation \eqref{Msch1:mIto}--\eqref{Msch4:mIto}, as follows: 
Seeking an $\F_{t}$-adapted and $H^1(\D)\times H^1(\D)$-valued process $(u(\cdot,t), w(\cdot,t))$ such that  
there hold $\P$-almost surely 
\begin{align}
(u(t),\phi)_{} &=  (u_0,\phi)_{} - \int_0^t (\nabla w(s), \nabla \phi)_{} \, ds - \frac{\delta^2}{2} \int_0^t (\nabla u(s) \cdot X, \nabla \phi \cdot X)_{} \, ds  \label{SCH1:mIto:w1}\\
& \qquad + \delta \int_0^t (\nabla u(s) \cdot X d W_s, \phi)_{} \qquad \forall \, \phi \in H^1(\D) \quad \forall \, t \in (0,T], \notag\\
(w(t), \varphi)_{} & = \epsilon (\nabla u(t), \nabla \varphi)_{} + \frac{1}{\epsilon} \( f(u(t)), \varphi \)_{}  \qquad \forall \, \varphi \in H^1(\D)  \quad \forall \, t \in (0,T]. \label{SCH1:mIto:w2}
\end{align}

Second, we derive a H\"{o}lder continuity estimate in time of the solution function $u$ with respect to the spatial $H^1$-seminorm.
 
\begin{lemma}\label{lem20180714_1}
Let $(u, w)$ be the solution to problem \eqref{Msch1:mIto}-\eqref{Msch4:mIto} and 
assume $u$ is sufficiently regular in the spatial variable. Then for any $t, s \in [0,T]$ with $t<s$, we have
\begin{align*}
\Eb{\|\nabla(u(s)-u(t))\|_{L^2(\D)}^2} + \epsilon \Eb{\int_t^s\|\nabla\Delta (u(\zeta)-u(t))\|_{L^2(\D)}^2d\zeta} \leq C_1(s-t),
\end{align*}
where
\begin{align*}
C_1 = C\sup_{t\leq\zeta\leq s}\Eb{\|\nabla\Delta u(\zeta)\|_{L^2(\D)}^2} + C(\delta,\frac{1}{\epsilon})\sup_{t\leq\zeta\leq s}\Eb{\| u(\zeta)\|_{H^2(\D)}^4}.
\end{align*}
\end{lemma}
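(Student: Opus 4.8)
The plan is to freeze the later time, set $v(\zeta):=u(\zeta)-u(t)$ for $\zeta\in[t,s]$ so that $v(t)=0$ and $dv=du$, and apply It\^o's formula to the functional $\zeta\mapsto\|\nabla v(\zeta)\|_{L^2(\D)}^2$. Substituting $w=-\epsilon\Delta u+\frac1\epsilon f(u)$ into \eqref{Msch1:mIto} gives the It\^o drift $a=-\epsilon\Delta^2u+\frac1\epsilon\Delta f(u)+\frac{\delta^2}{2}\div(B\nabla u)$ and diffusion $b=\delta\,\nabla u\cdot X$. Since $\|\nabla v\|_{L^2(\D)}^2=(\nabla v,\nabla v)$ and $\partial_n v=\partial_n u=0$ on $\partial\D$, It\^o's formula yields
\begin{align*}
\Eb{\|\nabla v(s)\|_{L^2(\D)}^2}=\Eb{\int_t^s\Bigl(2(\nabla v,\nabla a)+\|\nabla b\|_{L^2(\D)}^2\Bigr)\,d\zeta},
\end{align*}
because the It\^o (martingale) integral has zero expectation under the assumed regularity. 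Everything then reduces to estimating the drift pairing and the It\^o correction term.

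The decisive step is the biharmonic contribution $-2\epsilon(\nabla v,\nabla\Delta^2 u)$. First I would split $\Delta^2u=\Delta^2v+\Delta^2u(t)$ and integrate by parts twice. The boundary contributions vanish precisely because of the two conditions in \eqref{Msch3:mIto}: $\partial_n u=0$ annihilates the first boundary integral, while $\partial_n w=0$ together with $\partial_n u=0$ forces $\partial_n\Delta u=0$, which annihilates the second. This gives the identity $(\nabla v,\nabla\Delta^2\psi)=(\nabla\Delta v,\nabla\Delta\psi)$ for $\psi\in\{v,u(t)\}$, hence the good term $-2\epsilon\|\nabla\Delta v\|_{L^2(\D)}^2$ and the cross term $-2\epsilon(\nabla\Delta v,\nabla\Delta u(t))$. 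A weighted Young inequality leaves $\epsilon\|\nabla\Delta v\|_{L^2(\D)}^2$ on the left-hand side and produces the controllable term $\epsilon\|\nabla\Delta u(t)\|_{L^2(\D)}^2$, which, after taking the supremum over $\zeta$ and expectation, accounts for the first summand of $C_1$ (the factor $\epsilon\le C$ being absorbed).

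For the remaining drift terms I would integrate by parts once (using $\partial_n v=0$) to write $(\nabla v,\nabla\Delta f(u))=-(\Delta v,\Delta f(u))$ and $(\nabla v,\nabla\div(B\nabla u))=-(\Delta v,\div(B\nabla u))$. With $f(u)=u^3-u$ one has $\Delta f(u)=6u|\nabla u|^2+(3u^2-1)\Delta u$, and the Sobolev embedding $H^2(\D)\hookrightarrow L^\infty(\D)\cap W^{1,4}(\D)$ (valid for $d=2,3$) yields $\|\Delta f(u)\|_{L^2(\D)}\le C(1+\|u\|_{H^2(\D)}^3)$ and $\|\div(B\nabla u)\|_{L^2(\D)}\le C\|X\|_{W^{1,\infty}}\|u\|_{H^2(\D)}$. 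Pairing these with $\|\Delta v\|_{L^2(\D)}\le\|u(\zeta)\|_{H^2(\D)}+\|u(t)\|_{H^2(\D)}$ via Cauchy--Schwarz and Young produces the quartic bound $C(\delta,\tfrac1\epsilon)(1+\|u\|_{H^2(\D)}^4)$; the mass-conservation property $\int_\D v\,dx=0$ (a consequence of $\div X=0$ and $X\cdot n=0$) justifies the Poincar\'e inequality $\|\Delta v\|_{L^2(\D)}\le C\|\nabla\Delta v\|_{L^2(\D)}$ should I prefer to absorb a factor into the good term instead. The It\^o correction $\|\nabla b\|_{L^2(\D)}^2\le C\delta^2\|X\|_{W^{1,\infty}}^2\|u\|_{H^2(\D)}^2$ is handled the same way.

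Finally, since every integrand has been dominated, uniformly in $\zeta$, by a quantity whose expectation is controlled by $\sup_{t\le\zeta\le s}\Eb{\|\nabla\Delta u(\zeta)\|_{L^2(\D)}^2}$ or by $C(\delta,\tfrac1\epsilon)\sup_{t\le\zeta\le s}\Eb{\|u(\zeta)\|_{H^2(\D)}^4}$, integrating over $[t,s]$ contributes the factor $(s-t)$ and delivers the asserted estimate with $C_1$ as stated. I expect the main obstacle to be the rigorous justification of the double integration by parts in the biharmonic term---establishing $\partial_n\Delta u=0$ from the mixed boundary conditions and verifying that both boundary integrals genuinely vanish---since this is exactly what allows the coercive term $\epsilon\|\nabla\Delta v\|_{L^2(\D)}^2$ to survive with the correct sign; the infinite-dimensional It\^o formula itself is applied formally under the standing regularity hypothesis on $u$.
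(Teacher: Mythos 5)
Your proposal is correct and follows essentially the same route as the paper's proof: It\^{o}'s formula applied to $\|\nabla(u(s)-u(t))\|_{L^2(\D)}^2$, splitting the biharmonic drift into the coercive piece $-2\epsilon\|\nabla\Delta(u(\zeta)-u(t))\|_{L^2(\D)}^2$ plus a cross term with $\nabla\Delta u(t)$ handled by Young's inequality, the boundary conditions (including $\partial_n\Delta u=0$, which you rightly derive from $\partial_n w=\partial_n u=0$) justifying the integrations by parts, the embedding $H^2(\D)\hookrightarrow L^\infty(\D)$ for the cubic nonlinearity and the $\div(B\nabla u)$ and It\^{o}-correction terms, and expectation annihilating the martingale integral. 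The only immaterial bookkeeping difference is that you pair the nonlinear term as $-(\Delta v,\Delta f(u))$ whereas the paper keeps it in the form $(\nabla\Delta v,\nabla f(u))$; the two are equivalent via one further integration by parts using $\partial_n f(u)=f'(u)\partial_n u=0$.
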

\begin{proof}
Apply It\^{o}'s formula to $\Psi(u^{}(s)):=||\nabla u^{}(s)-\nabla u^{}(t)||_{L^2(\D)}^2$, and notice that
\begin{align}
\Psi^{\prime}(u^{})(v)&=2\int_{\D} (\nabla u^{}(s)-\nabla u^{}(t))\cdot \nabla v(s)dx,\label{eq20180714_1}\\
\Psi^{\prime\prime}(u^{})(m,v)&=2\int_{\D} \nabla m(s) \cdot \nabla v(s)dx,\label{eq20180714_2}
\end{align}
then we have
\begin{align}
&\|\nabla(u(s)-u(t))\|_{L^2(\D)}^2=2\int_{t}^s\bigl(\nabla(u(\zeta)-u(t)),\nabla(- \Delta (\epsilon \Delta u(\zeta) - \frac{1}{\epsilon} f(u(\zeta)) )\label{eq20180714_3}\\
&\qquad+ \frac{\delta^2}{2} \div(B\nabla u(\zeta)))\bigr)d\zeta+2\int_{t}^s\bigl(\nabla(u(\zeta)-u(t)),\nabla(\delta\nabla u(\zeta)\cdot X d{W}_{\zeta})\bigr)\notag\\
&\qquad+\delta^2\int_{t}^s(\nabla(\nabla u(\zeta)\cdot X),\nabla(\nabla u(\zeta)\cdot X))d\zeta.\notag
\end{align}

Then we obtain
\begin{align}
&\|\nabla(u(s)-u(t))\|_{L^2(\D)}^2=2\int_{t}^s\bigl(\nabla\Delta (u(\zeta)-u(t)),- \epsilon \nabla\Delta (u(\zeta) - u(t))\bigr)d\zeta\label{eq20180714_4}\\
&\qquad-2\int_{t}^s\bigl(\nabla\Delta (u(\zeta)-u(t)),\epsilon \nabla\Delta u(t)\bigr)d\zeta\notag\\
&\qquad+2\int_{t}^s\bigl(\nabla\Delta (u(\zeta)-u(t)),\frac{1}{\epsilon} \nabla f(u(\zeta))\bigr)d\zeta\notag\\
&\qquad-\delta^2\int_t^s\bigl(\Delta(u(\zeta)-u(t)),B:D^2u(\zeta)+\nabla u(\zeta)\cdot \div(B)\bigr)d\zeta\notag\\
&\qquad+2\int_{t}^s\bigl(\nabla(u(\zeta)-u(t)),\nabla(\delta\nabla u(\zeta)\cdot Xd{W}_{\zeta})\bigr)\notag\\
&\qquad+\delta^2\int_{t}^s\int_{\D}|D^2 u(\zeta)X+(\nabla X)^T\nabla u(\zeta)|^2dxd\zeta\notag,
\end{align}
where the embedding theorem from $H^2(\Omega)$ to $L^{\infty}(\Omega)$ is used in estimating the nonlinear term.

Taking the expectation on both sides of \eqref{eq20180714_4}, and using Young's inequality, the lemma is proved.
\end{proof}

It turns out we also need to control the chemical potential $w$ to handle the nonlinear terms in the error analysis. 
The following lemma establishes a H\"{o}lder continuity estimate in time for $w$ with respect to the spatial $H^1$-seminorm. 

\begin{lemma}\label{lem20180714_2}
Let $(u,w)$ be the solution to problem \eqref{Msch1:mIto}-\eqref{Msch4:mIto}
which is assumed to be sufficiently regular in the spatial variable. Then for any $t, s \in [0,T]$ with $t<s$, we have
\begin{align*}
\Eb{\|\nabla w(s)-\nabla w(t)\|_{L^2(\D)}^2} \leq C_2(s-t),
\end{align*}
where
\begin{align*}
C_2 = C\sup_{t\leq\zeta\leq s}\Eb{\|u(\zeta)\|_{H^7(\D)}^2} + C(\delta,\frac{1}{\epsilon})\sup_{t\leq\zeta\leq s}\Eb{\| u(\zeta)\|_{H^6(\D)}^6}.
\end{align*}

\end{lemma}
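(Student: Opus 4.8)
The plan is to reduce the estimate for $w$ to a higher-order analogue of Lemma~\ref{lem20180714_1}. Since $w=-\epsilon\Delta u+\frac{1}{\epsilon}f(u)$, differentiating in space gives the pointwise-in-time identity
\begin{align*}
\nabla w(s)-\nabla w(t)=-\epsilon\,\nabla\Delta\bigl(u(s)-u(t)\bigr)+\frac{1}{\epsilon}\bigl(f'(u(s))\nabla u(s)-f'(u(t))\nabla u(t)\bigr),
\end{align*}
whence $\|\nabla w(s)-\nabla w(t)\|_{L^2(\D)}^2\le 2\epsilon^2\|\nabla\Delta(u(s)-u(t))\|_{L^2(\D)}^2+\frac{2}{\epsilon^2}\|f'(u(s))\nabla u(s)-f'(u(t))\nabla u(t)\|_{L^2(\D)}^2$. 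The second (nonlinear, only first-order) term is the easy one: writing $f'(u(s))\nabla u(s)-f'(u(t))\nabla u(t)=f'(u(s))\nabla(u(s)-u(t))+(f'(u(s))-f'(u(t)))\nabla u(t)$ and using $f'(u)=3u^2-1$, the embedding $H^2(\D)\hookrightarrow L^\infty(\D)$, the moment bounds \eqref{eq:reg}, Lemma~\ref{lem20180714_1} together with its $L^2$-in-space analogue $\Eb{\|u(s)-u(t)\|_{L^2(\D)}^2}\le C(s-t)$ (obtained exactly as in Lemma~\ref{lem20180714_1} with $\Psi(u)=\|u(s)-u(t)\|_{L^2(\D)}^2$), and the Cauchy--Schwarz inequality in $\omega$, I would bound its expectation by a lower-order $O(s-t)$ contribution that is absorbed into $C_2$.

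The heart of the matter is the first term, for which I would prove the third-order H\"older estimate
\begin{align*}
\Eb{\|\nabla\Delta(u(s)-u(t))\|_{L^2(\D)}^2}\le C_2(s-t)
\end{align*}
by repeating the argument of Lemma~\ref{lem20180714_1} with the quadratic functional $\Psi(u(s))=\|\nabla\Delta(u(s)-u(t))\|_{L^2(\D)}^2$ in place of $\|\nabla(u(s)-u(t))\|_{L^2(\D)}^2$. Applying It\^o's formula along \eqref{Msch1:mIto} and taking expectation (the martingale has mean zero) produces, exactly as in \eqref{eq20180714_4} but shifted by one Laplacian, a leading biharmonic contribution that splits into the nonpositive self term $-2\epsilon\,\|\nabla\Delta^2(u(\zeta)-u(t))\|_{L^2(\D)}^2\le 0$ (which I discard) and a cross term $-2\epsilon\bigl(\nabla\Delta(u(\zeta)-u(t)),\nabla\Delta^3 u(t)\bigr)$, a nonlinear term $\frac{2}{\epsilon}\bigl(\nabla\Delta(u(\zeta)-u(t)),\nabla\Delta^2 f(u(\zeta))\bigr)$, the $\frac{\delta^2}{2}$-contribution coming from $\div(B\nabla u)$, and the quadratic-variation term $\delta^2\int_t^s\|\nabla\Delta(\nabla u\cdot X)\|_{L^2(\D)}^2\,d\zeta$.

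The two dominant terms must be handled asymmetrically, and this is where the orders $H^7$ and $H^6$ are produced. For the linear cross term I would apply Young's inequality \emph{without} integrating by parts, keeping $\nabla\Delta(u(\zeta)-u(t))$ (controlled crudely by $\sup_\zeta\|u(\zeta)\|_{H^3(\D)}$) and throwing all three Laplacians plus the gradient onto the frozen base point, giving $\|\nabla\Delta^3 u(t)\|_{L^2(\D)}^2\le\|u(t)\|_{H^7(\D)}^2$; integrating in $\zeta$ over $[t,s]$ yields $C\sup_{t\le\zeta\le s}\Eb{\|u(\zeta)\|_{H^7(\D)}^2}(s-t)$. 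For the nonlinear term I would instead integrate by parts to move derivatives onto the nonlinearity, rewriting it as $-\frac{2}{\epsilon}\bigl(\Delta(u(\zeta)-u(t)),\Delta^3 f(u(\zeta))\bigr)$, then use Young together with the algebra estimate $\|\Delta^3 f(u)\|_{L^2(\D)}\le\|f(u)\|_{H^6(\D)}\le C\|u\|_{H^6(\D)}^3$ (valid since $H^6(\D)$ is a Banach algebra for $d\le 3$), producing $C(\delta,\frac{1}{\epsilon})\sup_{t\le\zeta\le s}\Eb{\|u(\zeta)\|_{H^6(\D)}^6}(s-t)$. The remaining $\div(B\nabla u)$ drift term ($H^5$-order) and the quadratic-variation term ($H^4$-order) carry only powers of $\delta$ and $\|X\|_{C^k}$ and are of strictly lower differential order, hence are absorbed into the $H^7$-term, while the leftover increment factors from Young integrate to $O(s-t)$ against $\sup_\zeta\|u(\zeta)\|_{H^3(\D)}^2$. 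Combining with the decomposition and choosing the Young weights proportional to $\epsilon$ gives the claimed bound.

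The main obstacle is precisely this asymmetric bookkeeping in the third-order estimate: one must decide, term by term, whether to integrate by parts (as for the nonlinearity, to expose $\|f(u)\|_{H^6(\D)}$) or to bound crudely (as for the linear cross term, to expose $\|u(t)\|_{H^7(\D)}$), while verifying that every boundary term vanishes under the Neumann conditions \eqref{Msch3:mIto} and $X\cdot n=0$, and that all other terms are genuinely of lower order. A secondary point is that seventh spatial derivatives and the infinite-dimensional It\^o formula are used freely, which is legitimate only under the standing hypothesis that $u$ is sufficiently regular in space. An equivalent route is to apply It\^o's formula directly to $\|\nabla(w(s)-w(t))\|_{L^2(\D)}^2$ after deriving the It\^o dynamics of $w$ (which carries the extra correction $\frac{\delta^2}{2\epsilon}f''(u)(\nabla u\cdot X)^2$); there the $H^7$-term arises from the biharmonic part $-\epsilon\Delta^2 w$ of the drift via $\|\nabla\Delta^2 w\|_{L^2(\D)}\sim\|u\|_{H^7(\D)}$, but the functional is then no longer quadratic, so I prefer the reduction above.
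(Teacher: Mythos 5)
Your overall architecture is in fact the paper's own: the proof there also reduces $\|\nabla w(s)-\nabla w(t)\|_{L^2(\D)}^2$ through the constitutive relation to the two pieces $g_1=\|\epsilon\nabla\Delta(u(s)-u(t))\|_{L^2(\D)}^2$ and $g_2=\epsilon^{-2}\|\nabla f(u(s))-\nabla f(u(t))\|_{L^2(\D)}^2$, applies It\^o's formula to quadratic functionals of $u$ along \eqref{Msch1:mIto}, and extracts exactly your two leading constants: the $H^7$ second moment from pairing $\nabla\Delta(u(\zeta)-u(t))$ with $\nabla\Delta M_1(\zeta)$, where $M_1$ contains $\epsilon\Delta^2 u$, and the $H^6$ sixth moment from the cubic nonlinearity after Young's inequality and the embedding $H^2(\D)\hookrightarrow L^\infty(\D)$. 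Your third-order estimate for the $g_1$ piece --- It\^o applied to $\|\nabla\Delta(u(s)-u(t))\|_{L^2(\D)}^2$, discarding the sign-definite self term and applying Young to the cross term against $\nabla\Delta^3 u(t)$ --- is a legitimate variant of the paper's computation (the paper does not even split off the nonpositive part; it bounds the whole drift pairing by Young), and your order bookkeeping there, including the lower-order status of the $\div(B\nabla u)$ and quadratic-variation terms, is correct. Your caveat about boundary terms under the Neumann conditions applies equally to the paper's formal computation, so it does not count against you.

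The genuine gap is in the term you call easy. For $f'(u(s))\nabla(u(s)-u(t))$, and likewise for $(f'(u(s))-f'(u(t)))\nabla u(t)=3(u(s)+u(t))(u(s)-u(t))\nabla u(t)$, you invoke Cauchy--Schwarz in $\omega$, which produces factors of the form $\bigl(\Eb{\|f'(u(s))\|_{L^\infty(\D)}^4}\bigr)^{1/2}\bigl(\Eb{\|\nabla(u(s)-u(t))\|_{L^2(\D)}^4}\bigr)^{1/2}$: you need a \emph{fourth} moment of the increment, whereas Lemma~\ref{lem20180714_1} and your proposed $L^2$-in-space analogue are second-moment statements, and no interpolation recovers $\Eb{XY}\leq C\,\Eb{Y}$ from all moments of $X$ together with only $\Eb{Y}$. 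Two repairs are available: (i) prove $L^{2p}$-moment versions of the increment estimates by applying It\^o's formula to the $p$-th power of the same functional and using the Burkholder--Davis--Gundy inequality --- routine under the standing regularity assumptions, but it must be stated, since it is not contained in Lemma~\ref{lem20180714_1}; or (ii) do what the paper does for this piece: apply It\^o's formula directly to $g_2(u(s))$ via the derivatives \eqref{eq20180714_7}--\eqref{eq20180714_8}, so that after Young's inequality every drift and quadratic-variation term involves only instantaneous quantities $u(\zeta)$, $u(t)$, each controlled by \eqref{eq:reg} and $\sup_{t\leq\zeta\leq s}\Eb{\|u(\zeta)\|_{H^6(\D)}^6}$, and no moments of increments beyond order two ever appear. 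With either repair your argument closes and yields the stated $C_2$.
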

\begin{proof}
Define $g(u(s)) := g_1(u(s))+g_2(u(s))$, where 
\begin{align}
g_1(u(s)) &:= \|\epsilon\nabla\Delta u(s)-\epsilon\nabla\Delta u(t)\|_{L^2(\D)}^2,\notag\\
g_2(u(s)) &:= \|\frac{1}{\epsilon}\nabla f(u(s))-\frac{1}{\epsilon}\nabla f(u(t))\|_{L^2(\D)}^2.\notag
\end{align}
Notice that
\begin{align}
g_1^{\prime}(u^{})(v)&=2\epsilon^2\int_{\D}(\nabla\Delta u^{}(s)-\nabla\Delta u^{}(t)) \cdot \nabla\Delta v(s)dx,\label{eq20180714_5}\\
g_1^{\prime\prime}(u^{})(m,v)&=2\epsilon^2\int_{\D}\nabla\Delta m(s) \cdot \nabla\Delta v(s)dx,\label{eq20180714_6}
\end{align}
and
\begin{align}
g_2^{\prime}(u^{})(v)&=\frac{2}{\epsilon^2}\int_{\D}\bigl[3u^2(s)\nabla u(s)-\nabla u(s)-\nabla f(u(t))\bigr] \label{eq20180714_7} \\
&\qquad \cdot \bigl[6u(s)v(s)\nabla u(s)+3u^2(s)\nabla v(s)-\nabla v(s)\bigr]dx, \notag \\
g_2^{\prime\prime}(u^{})(m,v)&=\frac{2}{\epsilon^2}\int_{\D}\bigl[3u^2(s)\nabla u(s)-\nabla u(s)-\nabla f(u(t))\bigr] \notag \\
&\qquad \cdot  \bigl[6m(s)v(s)\nabla u(s)+6u(s)v(s)\nabla m(s)+6u(s)m(s)\nabla v(s)\bigr]dx\label{eq20180714_8} \\
&\quad+\frac{2}{\epsilon^2}\int_{\D}\bigl[6u(s)v(s)\nabla u(s)+3u^2(s)\nabla v(s)-\nabla v(s)\bigr]\notag\\
&\qquad \cdot  \bigl[3u^2(s)\nabla m(s)+6u(s)m(s)\nabla u(s)-\nabla m(s)\bigr] dx.\notag
\end{align}

Applying It\^{o}'s formula to $g(w(s)) := \|\nabla w(s)-\nabla w(t)\|_{L^2(\D)}^2$, then we have
\begin{align}\label{eq20180714_9}
&\|\nabla w(s) -\nabla w(t)\|_{L^2(\D)}^2 =2\epsilon^2\int_{t}^s\bigl(\nabla\Delta(u(\zeta)-u(t)),\nabla\Delta M_1(\zeta)\bigr)d\zeta\\
&\qquad+2\epsilon^2\int_{t}^s\bigl(\nabla\Delta(u(\zeta)-u(t)),\nabla\Delta M_2(\zeta)\bigr)d{W}_{\zeta}+\epsilon^2\int_{t}^s \int_{\D} \nabla\Delta M_2(\zeta) \notag\\
&\qquad\quad \cdot \nabla\Delta M_2(\zeta) dx d\zeta+\frac{2}{\epsilon^2}\int_{t}^s\int_{\D}\bigl[3u^2(\zeta)\nabla u(\zeta)-\nabla u(\zeta)-\nabla f(u(t))\bigr] \notag \\
&\qquad \quad \cdot  \bigl[6u(\zeta)M_1(\zeta)\nabla u(\zeta)+3u^2(\zeta)\nabla M_1(\zeta)-\nabla M_1(\zeta)\bigr]dx d\zeta\notag\\
&\qquad+\frac{2}{\epsilon^2}\int_{t}^s\int_{\D}\bigl[3u^2(\zeta)\nabla u(\zeta)-\nabla u(\zeta)-\nabla f(u(t))\bigr]\notag\\
&\qquad \quad \cdot  \bigl[6u(\zeta)M_2(\zeta) \nabla u(\zeta)+3u^2(\zeta)\nabla M_2(\zeta) -\nabla M_2(\zeta) \bigr]dxdW_{\zeta}\notag\\
&\qquad+\frac{\delta^2}{\epsilon^2}\int_{t}^s\int_{\D}\bigl[3u^2(\zeta)\nabla u(\zeta)-\nabla u(\zeta)-\nabla f(u(t))\bigr]\notag\\
&\qquad \quad \cdot  \bigl[6M_2^2(\zeta)\nabla u(\zeta)+6u(\zeta)M_2(\zeta)\nabla M_2(\zeta)+6u(\zeta)M_2(\zeta)\nabla M_2(\zeta)\bigr]dxd\zeta\notag\\
&\qquad+\frac{\delta^2}{\epsilon^2}\int_{t}^s\int_{\D}\bigl[6u(\zeta)M_2(\zeta)\nabla u(\zeta)+3u^2(\zeta)\nabla M_2(\zeta)-\nabla M_2(\zeta)\bigr]\notag\\
&\qquad \quad \cdot \bigl[3u^2(\zeta)\nabla M_2(\zeta)+6u(\zeta)M_2(\zeta)\nabla u(\zeta)-\nabla M_2(\zeta)\bigr] dx d\zeta, \notag
\end{align}
where 
\begin{align*}
M_1(\zeta)&:=- \Delta \Big(\epsilon \Delta u(\zeta) - \frac{1}{\epsilon} f(u(\zeta)) \Big) + \frac{\delta^2}{2} \div(B\nabla u(\zeta)), \\
M_2(\zeta)&:=\delta \nabla u(\zeta)\cdot X.
\end{align*}
Taking the expectation on both sides of \eqref{eq20180714_9}, 
and using Young's inequality and the embedding theorem, the lemma is proved.
\end{proof}

\section{Formulation of mixed finite element method}\label{sec:fem}
In this section we define our mixed finite element method for \eqref{SCH1:mIto:w1}-\eqref{SCH1:mIto:w2} 
and introduce several auxiliary operators that will be used in Section~\ref{sec:anal}.  

Let $t_n = n \tau$ ($n = 0, 1, ..., N$) be a uniform partition of $[0, T]$ with $\tau = T/N$ and $\mathcal{T}_h$ be a quasi-uniform triangulation of $\D$. Let $V_h$ be the finite element space given by 
\[
V_h := \{ v_h \in H^1(\D): v_h|_K \in \mathcal{P}_1(K) \quad \forall \, K \in \mathcal{T}_h \},  
\]
where $\mathcal{P}_1(K)$ denotes the space of polynomials of degree one on $K \in \mathcal{T}_h$.  
Our fully discrete mixed finite element methods for \eqref{SCH1:mIto:w1}-\eqref{SCH1:mIto:w2} is defined as seeking
$\F_{t_n}$-adapted  and $V_h \times V_h$-valued process $\{ (u_h^n, w_h^n) \}$ ($n = 1, \dots, N$) such 
that $\P$-almost surely
\begin{align}
(u^{n+1}_h,\eta_h)_{} &=  (u^n_h,\eta_h)_{} - \tau (\nabla w^{n+1}_h, \nabla \eta_h)_{} - \tau \frac{\delta^2}{2} (\nabla u^{n+1}_h \cdot X, \nabla \eta_h \cdot X)_{} \label{SCH1:m:dw1} \\
& \qquad + \delta (\nabla u^n_h \cdot X \bar{\Delta} W_{n+1}, \eta_h)_{} \qquad \forall \, \eta_h \in V_h, \notag \\
(w^{n+1}_h, v_h)_{} & = \epsilon (\nabla u^{n+1}_h, \nabla v_h)_{} + \frac{1}{\epsilon} \left( f^{n+1}, v_h \right)_{}  \qquad \forall \, v_h \in V_h, \label{SCH1:m:dw2}
\end{align}
where $\bar{\Delta}$ denotes the difference operator, $\bar{\Delta} W_{n+1} := W_{t_{n+1}} - W_{t_n} \sim \mathcal{N}(0, \tau)$ and $f^{n+1} := (u^{n+1}_h)^3 - u^{n+1}_h$. 
The initial values $(u^0_h, w^0_h)$ are chosen by solving 
\begin{align*}
(u^0_h,  v_h)&= (u_0, v_h) \qquad \forall \, v_h \in V_h, \\
(w^0_h, v_h) &= \epsilon (\nabla u^0_h, \nabla v_h) + \frac{1}{\epsilon} ((u^0_h)^3 - u^0_h, v_h) \qquad \forall \, v_h \in V_h. 
\end{align*}
Note that $u^0_h = P_h u_0$ where $P_h: L^2(\D) \longrightarrow V_h$ is the standard $L^2$-projection operator satisfying the following error estimates \cite{C1978, BS2008} 
\begin{align}\label{est:Ph}
&\| v - P_h v \|_{L^2(\D)} + h \| \nabla (v - P_h v) \|_{L^2(\D)} \leq C h^2 \|v\|_{H^2(\D)}, \\
&\| v - P_h v \|_{L^{\infty}(\D)} \leq C h^{2-d/2} \|v\|_{H^2(\D)} \label{est:Ph2} 
\end{align}
for all $v \in H^2(\D)$. 
Furthermore, a direct calculation shows that the numerical solution function $u^n_h$ satisfies
the sample-wise mass conservation property, i.e., $(u^n_h, 1) = (u_0,1) $ 
almost surely for all $n = 0, 1, ..., N$. 

Let $\mathring{V}_h$ be the subspace of  $V_h$ with zero mean, i.e., 
\begin{align} 
\mathring{V}_h := \bigl\{ v_h \in V_h:\, (v_h, 1)= 0 \bigr\}. 
\end{align}
We introduce the inverse discrete 
Laplace operator $\Delta_h^{-1}: \mathring{V}_h \rightarrow \mathring{V}_h$ as follows: given 
$\zeta \in \mathring{V}_h$, define $\Delta_h^{-1}\zeta \in \mathring{V}_h$ such that 
\begin{equation}\label{eq:distInvLap}
\bigl( \nabla(-\Delta_h^{-1}\zeta),\nabla v_h \bigr) = \bigl( \zeta, v_h \bigr) \qquad \forall\, v_h\in V_h.
\end{equation} 
For any $\zeta, \Phi \in \mathring{V}_h$, we can define the discrete $H^{-1}$ inner product by
\begin{equation}\label{eq3.3}
(\zeta,\Phi)_{-1,h}:= \bigl(\nabla(-\Delta_h^{-1}\zeta),\nabla(-\Delta_h^{-1}\Phi) \bigr)
=\bigl(\zeta,-\Delta_h^{-1}\Phi\bigr)=\bigl(-\Delta_h^{-1}\zeta,\Phi\bigr). 
\end{equation}
The induced mesh-dependent $H^{-1}$ norm is given by
\begin{equation}\label{eq3.4}
\|\zeta\|_{-1,h}:=\sqrt{(\zeta,\zeta)_{-1,h}}
=\mathop{\sup}_{\Phi\in\mathring{V}_h}\limits\frac{(\zeta,\Phi)}{|\Phi|_{H^1(\D)}}.
\end{equation}
The following properties can be easily 
verified (cf. \cite{Aristotelous12}):
\begin{alignat}{2}\label{eq3.5}
|(\zeta,\Phi)| &\leq\|\zeta\|_{-1,h} |\Phi|_{H^1(\D)} &&\qquad\forall\,\zeta\in \mathring{V}_h, \ \Phi\in \mathring{V}_h, \\
\|\zeta\|_{-1,h} &\leq C\|\zeta\|_{L^2(\D)} &&\qquad\forall\,\zeta\in \mathring{V}_h, \label{eq3.6}
\end{alignat}
and, if $\mathcal{T}_h$ is quasi-uniform, we further have 
\begin{equation}\label{eq3.7}
\|\zeta\|_{L^2(\D)}\leq C h^{-1}\|\zeta\|_{-1,h}\qquad\forall\,\zeta\in \mathring{V}_h.
\end{equation}


Setting $\u^n_h = u^n_h - \bar{u}_0$ and $\w^n_h = w^n - \bar{w}^n_h$, where $\bar{v} = |\D|^{-1} (v, 1)$, 
we can equivalently formulate \eqref{SCH1:m:dw1}--\eqref{SCH1:m:dw2} as: 
seeking $\F_{t_n}$-adapted and $\mathring{V}_h \times \mathring{V}_h$-valued process $\{ (\u_h^n, \w_h^n) \}$ ($n = 1, \dots, N$) such 
that $\P$-almost surely 
\begin{align}
(\u^{n+1}_h,\eta_h)_{} &=  (\u^n_h,\eta_h)_{} - \tau \bigl(\nabla \w^{n+1}_h, \nabla \eta_h\bigr)  - \tau \frac{\delta^2}{2} \bigl(\nabla \u^{n+1}_h \cdot X, \nabla \eta_h \cdot X \bigr) \label{SCH1:m:dw1:v2} \\
& \qquad + \delta \bigl(\nabla \u^n_h \cdot X \bar{\Delta} W_{n+1}, \eta_h \bigr)  \qquad \forall \, \eta_h \in \mathring{V}_h, \notag \\
(\w^{n+1}_h, v_h)_{} & = \epsilon \bigl(\nabla \u^{n+1}_h, \nabla v_h \bigr)  + \frac{1}{\epsilon} \bigl( \hat{f}^{n+1}, v_h \bigr)   \qquad \forall \, v_h \in \mathring{V}_h, \label{SCH1:m:dw2:v2}
\end{align}
where $\hat{f}^{n+1} = (\u^{n+1}_h + \bar{u}_0)^3 - (\u^{n+1}_h + \bar{u}_0)$.  

The next theorem establishes the well-posedness for the proposed numerical method.

\begin{theorem}\label{thm:existence} 
The scheme \eqref{SCH1:m:dw1}--\eqref{SCH1:m:dw2} (or \eqref{SCH1:m:dw1:v2}--\eqref{SCH1:m:dw2:v2}) is uniquely solvable, provided that the following mesh constraint is satisfied 
\begin{align}\label{mesh-constraint}
\tau \leq C (\epsilon^{-3} + \epsilon^{-1} \delta^4)^{-1}. 
\end{align}
\end{theorem}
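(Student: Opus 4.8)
The plan is to reduce the coupled scheme to a single nonlinear equation for the mean-zero part $\u_h^{n+1}\in\mathring{V}_h$, to prove existence by a Brouwer fixed-point argument, and to prove uniqueness by a monotonicity-type difference estimate in the discrete $H^{-1}$ norm; the mesh constraint \eqref{mesh-constraint} is precisely the condition under which this last estimate forces the two candidate solutions to coincide. First I would observe that, by the sample-wise mass conservation, $(u_h^{n+1},1)=(u_0,1)$ is prescribed, while testing \eqref{SCH1:m:dw2} against $v_h=1$ determines the mean value of $w_h^{n+1}$; hence it suffices to solve for $(\u_h^{n+1},\w_h^{n+1})\in\mathring{V}_h\times\mathring{V}_h$ through \eqref{SCH1:m:dw1:v2}--\eqref{SCH1:m:dw2:v2}. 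Eliminating $\w_h^{n+1}$ by testing \eqref{SCH1:m:dw1:v2} with $\eta_h=-\Delta_h^{-1}\Phi$ ($\Phi\in\mathring{V}_h$), using \eqref{eq:distInvLap} and \eqref{eq3.3}, and then inserting \eqref{SCH1:m:dw2:v2}, one arrives at the single equation: find $v:=\u_h^{n+1}\in\mathring{V}_h$ with
\begin{align*}
(v-g,\Phi)_{-1,h}+\tau\tfrac{\delta^2}{2}\bigl(\nabla v\cdot X,\nabla(-\Delta_h^{-1}\Phi)\cdot X\bigr)+\tau\epsilon(\nabla v,\nabla\Phi)+\tfrac{\tau}{\epsilon}\bigl(\hat f(v),\Phi\bigr)=0
\end{align*}
for all $\Phi\in\mathring{V}_h$, where $g\in\mathring{V}_h$ collects the (fully explicit) step-$n$ data and $\hat f(v)=(v+\bar u_0)^3-(v+\bar u_0)$.

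For existence, I would define $F:\mathring{V}_h\to\mathring{V}_h$ by $(F(v),\Phi)_{-1,h}=$ (left-hand side above) for all $\Phi$, which is continuous on the finite-dimensional space $\mathring{V}_h$. Taking $\Phi=v$ gives $(F(v),v)_{-1,h}=\|v\|_{-1,h}^2-(g,v)_{-1,h}+\tau\epsilon\|\nabla v\|_{L^2(\D)}^2+\tfrac{\tau}{\epsilon}(\hat f(v),v)+\tau\tfrac{\delta^2}{2}(\nabla v\cdot X,\nabla(-\Delta_h^{-1}v)\cdot X)$. Since $(\hat f(v),v)$ grows quartically, controlling $\|v\|_{L^4(\D)}^4$ up to lower-order terms, while the indefinite last term is bounded by $\tau\tfrac{\delta^2}{2}\|X\|_{L^\infty}^2\|\nabla v\|_{L^2(\D)}\|v\|_{-1,h}$ via Cauchy--Schwarz and the identity $\|\nabla(-\Delta_h^{-1}v)\|_{L^2(\D)}=\|v\|_{-1,h}$ from \eqref{eq3.3}, the quartic term dominates every lower-order contribution once $\|v\|_{-1,h}$ is large. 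Hence $(F(v),v)_{-1,h}>0$ on a sufficiently large sphere, and the standard consequence of Brouwer's theorem yields a zero of $F$, i.e. a solution $\u_h^{n+1}$; then $\w_h^{n+1}$ is recovered from \eqref{SCH1:m:dw2:v2} and the constants from the reductions above.

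The crux is uniqueness, where the mesh constraint enters. Suppose $(\u^{(1)},\w^{(1)})$ and $(\u^{(2)},\w^{(2)})$ both solve \eqref{SCH1:m:dw1:v2}--\eqref{SCH1:m:dw2:v2} for the same data, and set $e:=\u^{(1)}-\u^{(2)}\in\mathring{V}_h$. Testing the difference of \eqref{SCH1:m:dw1:v2} with $\eta_h=-\Delta_h^{-1}e$ and the difference of \eqref{SCH1:m:dw2:v2} with $v_h=e$, and combining, I obtain
\begin{align*}
\|e\|_{-1,h}^2+\tau\epsilon\|\nabla e\|_{L^2(\D)}^2+\tfrac{\tau}{\epsilon}\bigl(\hat f(\u^{(1)})-\hat f(\u^{(2)}),e\bigr)+\tau\tfrac{\delta^2}{2}\bigl(\nabla e\cdot X,\nabla(-\Delta_h^{-1}e)\cdot X\bigr)=0.
\end{align*}
The cubic part of $\hat f$ is monotone, so $(\hat f(\u^{(1)})-\hat f(\u^{(2)}),e)\ge-\|e\|_{L^2(\D)}^2$; using the interpolation bound $\|e\|_{L^2(\D)}^2\le\|e\|_{-1,h}\|\nabla e\|_{L^2(\D)}$ from \eqref{eq3.5}, the same Cauchy--Schwarz bound on the $X$-term as above, and Young's inequality to absorb $\tfrac14\|e\|_{-1,h}^2$ from each of the two bad terms, I reach $\tfrac12\|e\|_{-1,h}^2+\bigl(\tau\epsilon-\tfrac{\tau^2}{\epsilon^2}-C\tau^2\delta^4\bigr)\|\nabla e\|_{L^2(\D)}^2\le0$. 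Under \eqref{mesh-constraint}, namely $\tau\le C(\epsilon^{-3}+\epsilon^{-1}\delta^4)^{-1}$, the coefficient $\tau\epsilon-\tfrac{\tau^2}{\epsilon^2}-C\tau^2\delta^4$ is nonnegative, forcing $e=0$ and hence $\w^{(1)}=\w^{(2)}$. The main obstacle throughout is the implicitly discretized gradient-noise term $\tfrac{\delta^2}{2}\div(B\nabla u^{n+1})$: after eliminating $\w_h^{n+1}$ it becomes the non-symmetric, sign-indefinite quantity $\tau\tfrac{\delta^2}{2}(\nabla e\cdot X,\nabla(-\Delta_h^{-1}e)\cdot X)$, and its control via Cauchy--Schwarz and Young's inequality is exactly what produces the $\epsilon^{-1}\delta^4$ part of the time-step restriction, while balancing the nonconvex cubic against the discrete $H^{-1}$ norm produces the $\epsilon^{-3}$ part.
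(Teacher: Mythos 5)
Your proposal is correct and follows essentially the same route as the paper: eliminate $\w_h^{n+1}$ by testing \eqref{SCH1:m:dw1:v2} with $-\Delta_h^{-1}$-images of mean-zero test functions, get existence from Brouwer's fixed point theorem applied to the reduced map on $\mathring{V}_h$, and get uniqueness from monotonicity of the cubic combined with $\|e\|_{L^2(\D)}^2\le\|e\|_{-1,h}\|\nabla e\|_{L^2(\D)}$, the Cauchy--Schwarz bound on the $X$-term, and Young's inequality, which is exactly where the constraint $\tau\le C(\epsilon^{-3}+\epsilon^{-1}\delta^4)^{-1}$ arises in the paper as well. The only (harmless) deviation is in the existence step: the paper drops the quartic term and invokes the mesh constraint to prove $(B(v_h),v_h)_{-1,h}\ge 0$ on a sphere of radius $C(\|\u_h^n\|_{-1,h}+\|\u_h^n\|_{L^2(\D)})$, thereby also obtaining the a priori bound \eqref{exist:10} as a byproduct, whereas you let the quartic dominate (valid by norm equivalence on the finite-dimensional space $\mathring{V}_h$, with an $h$-dependent radius), making your existence claim constraint-free.
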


\begin{proof}
For any $v_h \in \mathring{V}_h$, let $\eta_h = - \Delta^{-1}_h v_h \in \mathring{V}_h$ in \eqref{SCH1:m:dw1:v2}, we have 
\begin{align}\label{exist:1}
(\u^{n+1}_h, - \Delta^{-1}_h v_h) &= (\u^n_h, - \Delta^{-1}_h v_h) - \tau (\nabla \w^{n+1}_h, \nabla (- \Delta^{-1}_h v_h)) \\
& \qquad - \tau \frac{\delta^2}{2} \bigl(\nabla \u^{n+1}_h \cdot X, \nabla (- \Delta^{-1}_h v_h) \cdot X\bigr) \notag \\
& \qquad + \delta \bigl(\nabla \u^n_h \cdot X \bar{\Delta} W_{n+1}, - \Delta^{-1}_h v_h \bigr). \notag
\end{align}
By \eqref{eq3.3}, \eqref{SCH1:m:dw2:v2} and integration by parts, we can rewrite \eqref{exist:1} as 
\begin{align}\label{exist:2}
(\u^{n+1}_h, v_h)_{-1,h} &+ \tau \epsilon (\nabla \u^{n+1}_h, \nabla v_h) + \frac{\tau}{\epsilon} \left( (\u^{n+1}_h + \bar{u}_0)^3, v_h \right) - \frac{\tau}{\epsilon} (\u^{n+1}_h, v_h) \\
& \quad + \tau \frac{\delta^2}{2} \bigl(\nabla \u^{n+1}_h \cdot X, \nabla (- \Delta^{-1}_h v_h) \cdot X \bigr) - (\u^n_h, v_h)_{-1,h} \notag \\
& \quad - \delta \bigl(\u^n_h X, \nabla (- \Delta^{-1}_h v_h) \bigr) \bar{\Delta} W_{n+1} = 0 \qquad \forall \, v_h \in \mathring{V}_h. \notag 
\end{align}

Now we define $B: \mathring{V}_h \longrightarrow \mathring{V}_h$ by 
\begin{align}\label{exist:3}
(B(z), v_h)_{-1,h} &= (z, v_h)_{-1,h} + \tau \epsilon (\nabla z, \nabla v_h) + \frac{\tau}{\epsilon} \left( (z + \bar{u}_0)^3, v_h \right) \\
& \quad - \frac{\tau}{\epsilon} (z, v_h) + \tau \frac{\delta^2}{2} \bigl(\nabla z \cdot X, \nabla (- \Delta^{-1}_h v_h) \cdot X\bigr) - (\u^n_h, v_h)_{-1,h} \notag \\
& \quad - \delta \bigl(\u^n_h X, \nabla (- \Delta^{-1}_h v_h) \bigr) \bar{\Delta} W_{n+1} \qquad \forall \, z, v_h \in \mathring{V}_h. \notag 
\end{align}
For any $v_h \in \mathring{V}_h$, we have 
\begin{align}\label{exist:4}
(B(v_h), v_h)_{-1,h} &= \| v_h \|_{-1,h}^2 + \tau \epsilon \| \nabla v_h \|_{L^2(\D)}^2 + \frac{\tau}{\epsilon} \left( (v_h + \bar{u}_0)^3, v_h \right)  \\
& \qquad - \frac{\tau}{\epsilon} \| v_h \|^2_{L^2(\D)} + \tau \frac{\delta^2}{2} \bigl(\nabla v_h \cdot X, \nabla (- \Delta^{-1}_h v_h) \cdot X \bigr)  \notag \\
&\qquad  - (\u^n_h, v_h)_{-1,h}  - \delta \bigl(\u^n_h X, \nabla (- \Delta^{-1}_h v_h) \bigr) \bar{\Delta} W_{n+1}. \notag 
\end{align}
Notice that 
\begin{align}\label{exist:5}
\frac{\tau}{\epsilon} \left( (v_h + \bar{u}_0)^3, v_h \right) &= \frac{\tau}{\epsilon} \( v_h^3 + 3 v_h^2 \bar{u}_0 + 3 v_h \bar{u}_0^2 + \bar{u}^3_0, v_h \) \\
&= \frac{\tau}{\epsilon} \| v_h \|^4_{L^4(\D)} + \frac{3 \tau \bar{u}_0}{\epsilon} \| v_h \|^3_{L^3(\D)} + \frac{3 \tau \bar{u}_0^2}{\epsilon} \| v_h \|^2_{L^2(\D)} \notag \\ 
&\geq \frac{3 \tau \bar{u}_0^2}{4 \epsilon} \| v_h \|^2_{L^2(\D)}, \notag 
\end{align}
where we had used $$\frac{3 \tau \bar{u}_0}{\epsilon} \| v_h \|^3_{L^3(\D)} \geq - \frac{\tau}{\epsilon} \| v_h \|_{L^4(\D)}^4 - \frac{9 \tau \bar{u}_0^2}{4 \epsilon} \| v_h \|_{L^2(\D)}^2$$ to obtain the last inequality. Moreover, we have 
\begin{align}\label{exist:6}
- \frac{\tau}{\epsilon} \| v_h \|^2_{L^2(\D)} \geq - \frac{C \tau}{\epsilon^3} \| v_h \|_{-1,h}^2 - \frac{\tau \epsilon}{4} \| \nabla v_h \|_{L^2(\D)}^2 
\end{align}
by \eqref{eq3.5}, and 
\begin{align}\label{exist:7}
\tau \frac{\delta^2}{2} (\nabla v_h \cdot X, \nabla (- \Delta^{-1}_h v_h) \cdot X) \geq - \frac{\tau \epsilon}{4} \| \nabla v_h \|_{L^2(\D)}^2 - \frac{C \tau \delta^4}{\epsilon} \| v_h \|^2_{-1,h} 
\end{align}
by the Cauchy-Schwarz inequality. Combining \eqref{exist:4}--\eqref{exist:7} yields 
\begin{align}\label{exist:8}
(B(v_h), v_h)_{-1,h} &\geq \Bigl\{ \bigl[ 1 - C \tau \( \epsilon^{-3} + \delta^4 \epsilon^{-1} \) \bigr] \| v_h \|_{-1,h} - \| \u^n_h \|_{-1,h}    \\
& \qquad\quad - C \| \u^n_h \|_{L^2(\D)} \Bigr\} \| v_h \|_{-1,h} + \frac{\tau \epsilon}{2} \| \nabla v_h \|_{L^2(\D)}^2.   \notag
\end{align}
Hence we have 
\begin{align}\label{exist:9}
&(B(v_h), v_h)_{-1,h} \geq 0 \qquad \forall \, v_h \in \mathring{V}_h, \\
&\| v_h \|_{-1,h} = C (\| \u^n_h \|_{-1,h} + \| \u^n_h \|_{L^2(\D)}), \label{exist:9b}
\end{align}
provided that the mesh constraint \eqref{mesh-constraint} holds. 
It follows from Brouwer's fixed point theorem (cf. \cite{GR1986} and \cite[Lemma 7.2]{R2008}) that there exists $\u^{n+1}_h \in \mathring{V}_h$ such that 
\begin{align}\label{exist:10}
B(\u^{n+1}_h) = 0, \qquad \| \u^{n+1}_h \|_{-1,h} \leq C (\| \u^n_h \|_{-1,h} + \| \u^n_h \|_{L^2(\D)}), 
\end{align}
which also implies the existence of the solution to \eqref{exist:2}. This $\u^{n+1}_h$ together with $\w^{n+1}_h$ determined by \eqref{SCH1:m:dw2:v2} solves 
\eqref{SCH1:m:dw1:v2}--\eqref{SCH1:m:dw2:v2}. 

Next, it suffices to establish the uniqueness of the solution to \eqref{exist:2}. Assume $\u^{n+1}_{h,1}$ and $\u^{n+1}_{h,2}$ are two solutions to \eqref{exist:2}. Denote $U^{n+1}_h = \u^{n+1}_{h,1} - \u^{n+1}_{h,2}$, we have 
\begin{align}\label{unique:1}
&(U^{n+1}_h, v_h)_{-1,h} + \tau \epsilon (\nabla U^{n+1}_h, \nabla v_h)  
+ \frac{\tau}{\epsilon} \Bigl( (\u^{n+1}_{h,1} + \bar{u}_0)^3 - (\u^{n+1}_{h,2} + \bar{u}_0)^3, v_h \Bigr)  \\
&\quad  - \frac{\tau}{\epsilon} (U^{n+1}_h, v_h) + \tau \frac{\delta^2}{2} (\nabla U^{n+1}_h \cdot X, \nabla (- \Delta^{-1}_h v_h) \cdot X) = 0 \notag \qquad \forall \, v_h \in \mathring{V}_h. 
\end{align}
Taking $v_h = U^{n+1}_h$ in \eqref{unique:1}, using the fact that 
\begin{align}\label{unique:2}
\frac{\tau}{\epsilon} \Bigl( (\u^{n+1}_{h,1} + \bar{u}_0)^3 - (\u^{n+1}_{h,2} + \bar{u}_0)^3, U^{n+1}_h \Bigr) \geq 0, 
\end{align}
and the similar estimates to \eqref{exist:6} and \eqref{exist:7}, we obtain 
 \begin{align}\label{unique:3}
\left[ 1 - C \tau \( \epsilon^{-3} + \delta^4 \epsilon^{-1} \) \right] \| U^{n+1}_h \|^2_{-1,h} + \frac{\tau \epsilon}{2} \| \nabla U^{n+1}_h \|^2_{L^2(\D)} \leq 0. 
\end{align}
Therefore, under the mesh constraint \eqref{mesh-constraint}, we conclude that $U^{n+1}_h = 0$. This completes the proof. 
\end{proof}

Next theorem derives an {\em a priori} estimates for $u^n_h$. 

\begin{theorem}\label{thm:-1h-stability} 
Let $(u^n_h, w^n_h) \in V_h \times V_h$ be the unique solution of \eqref{SCH1:m:dw1}--\eqref{SCH1:m:dw2} and suppose the mesh constraint \eqref{mesh-constraint} is satisfied, there holds 
 \begin{align}\label{stability:-1h}
\sup_{0 \leq n \leq N} \Eb{\| u^n_h \|_{-1,h}^2} + \Eb{\sum_{n=1}^N \tau \| \nabla u^n_h \|_{L^2(\D)}^2} \leq C(\delta, \epsilon^{-1}). 
\end{align}
\end{theorem}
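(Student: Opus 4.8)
The plan is to derive a discrete energy law in the mesh-dependent $H^{-1}$ norm by testing the momentum equation \eqref{SCH1:m:dw1:v2} with the discrete inverse Laplacian of the new iterate, then to sum in time, take expectations, and close with a discrete Gronwall argument. Since $\|\cdot\|_{-1,h}$ is defined only on $\mathring{V}_h$, I would work throughout with the zero-mean reformulation \eqref{SCH1:m:dw1:v2}--\eqref{SCH1:m:dw2:v2}; the bound for $\u^n_h$ then yields the stated bound for $u^n_h$, since $\nabla u^n_h = \nabla \u^n_h$ and the mean $\bar u_0$ is a fixed constant.

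First I would take $\eta_h = -\Delta_h^{-1}\u^{n+1}_h \in \mathring{V}_h$ in \eqref{SCH1:m:dw1:v2}. By \eqref{eq3.3} the left-hand side becomes $\|\u^{n+1}_h\|^2_{-1,h}$ and the leading right-hand term becomes $(\u^n_h,\u^{n+1}_h)_{-1,h}$, so the polarization identity $(\u^{n+1}_h-\u^n_h,\u^{n+1}_h)_{-1,h} = \frac12(\|\u^{n+1}_h\|^2_{-1,h} - \|\u^n_h\|^2_{-1,h} + \|\u^{n+1}_h-\u^n_h\|^2_{-1,h})$ produces both the telescoping structure and a nonnegative numerical-dissipation term. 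Using \eqref{eq:distInvLap}, the viscous term $-\tau(\nabla\w^{n+1}_h,\nabla(-\Delta_h^{-1}\u^{n+1}_h))$ collapses to $-\tau(\w^{n+1}_h,\u^{n+1}_h)$, and then \eqref{SCH1:m:dw2:v2} with $v_h=\u^{n+1}_h$ turns it into $-\tau\epsilon\|\nabla\u^{n+1}_h\|^2_{L^2(\D)} - \frac\tau\epsilon(\hat f^{n+1},\u^{n+1}_h)$. For the nonlinearity I would reuse the splitting from \eqref{exist:5}, which (using $(\u^{n+1}_h,1)=0$) gives $(\hat f^{n+1},\u^{n+1}_h) \geq -\|\u^{n+1}_h\|^2_{L^2(\D)}$; the resulting $\frac\tau\epsilon\|\u^{n+1}_h\|^2_{L^2(\D)}$ is handled by the interpolation $\|\u^{n+1}_h\|^2_{L^2(\D)} \leq \|\u^{n+1}_h\|_{-1,h}\|\nabla\u^{n+1}_h\|_{L^2(\D)}$ from \eqref{eq3.5} and Young's inequality, yielding $\frac{C\tau}{\epsilon^3}\|\u^{n+1}_h\|^2_{-1,h}$ plus a fraction of $\tau\epsilon\|\nabla\u^{n+1}_h\|^2_{L^2(\D)}$ absorbed by the viscous term. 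The Itô-correction drift term is bounded exactly as in \eqref{exist:7}, contributing $\frac{C\tau\delta^4}\epsilon\|\u^{n+1}_h\|^2_{-1,h}$ and another absorbable gradient fraction.

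The crux is the stochastic term $\delta\,\bar{\Delta} W_{n+1}(\nabla\u^n_h\cdot X, -\Delta_h^{-1}\u^{n+1}_h)$. Because the noise coefficient is evaluated at the adapted iterate $\u^n_h$ while the test function involves the non-adapted $\u^{n+1}_h$, this product has no zero conditional expectation, so I would split $\u^{n+1}_h = \u^n_h + (\u^{n+1}_h-\u^n_h)$. The part with $\u^n_h$ is $\F_{t_n}$-measurable and multiplied by the independent mean-zero increment $\bar{\Delta} W_{n+1}$, hence vanishes under expectation. For the cross term with $\Phi := \u^{n+1}_h-\u^n_h$, I would invoke the divergence-free structure of $X$ together with $X\cdot n = 0$ to integrate by parts, writing $(\nabla\u^n_h\cdot X, -\Delta_h^{-1}\Phi) = -(\u^n_h X, \nabla(-\Delta_h^{-1}\Phi))$, and thereby bound it by $\|X\|_{L^\infty(\D)}\|\u^n_h\|_{L^2(\D)}\|\Phi\|_{-1,h}$ --- crucially with \emph{no} derivative falling on $\u^n_h$. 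Young's inequality then splits this into $\frac14\|\u^{n+1}_h-\u^n_h\|^2_{-1,h}$, absorbed by the numerical-dissipation term from the polarization identity, and $C\delta^2(\bar{\Delta} W_{n+1})^2\|\u^n_h\|^2_{L^2(\D)}$, whose expectation equals $C\delta^2\tau\,\Eb{\|\u^n_h\|^2_{L^2(\D)}}$ by independence; a further interpolation as above converts this into $\frac{C\delta^4}\epsilon\tau\,\Eb{\|\u^n_h\|^2_{-1,h}}$ plus an absorbable gradient fraction.

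Finally I would sum over $n$ from $0$ to $m-1$, take expectations (killing the martingale terms), and collect estimates to obtain, for every $m$, $\Eb{\|\u^m_h\|^2_{-1,h}} + \tau\epsilon\sum_{n=1}^m\Eb{\|\nabla\u^n_h\|^2_{L^2(\D)}} \leq C\,\Eb{\|\u^0_h\|^2_{-1,h}} + C\tau(\epsilon^{-3}+\delta^4\epsilon^{-1})\sum_{n=0}^m\Eb{\|\u^n_h\|^2_{-1,h}}$. Under the mesh constraint \eqref{mesh-constraint} the $n=m$ term on the right is absorbed into the left, and the discrete Gronwall inequality gives the bound with a constant of the form $C\,e^{CT(\epsilon^{-3}+\delta^4\epsilon^{-1})}$ depending only on $\delta$, $\epsilon^{-1}$, and the bounded initial energy, as claimed. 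I expect the main obstacle to be precisely the stochastic cross term: producing a bound that (i) carries no spatial derivative on $\u^n_h$, so that it closes in the $H^{-1}$-type norm, which forces the use of the divergence-free field $X$, and (ii) is consistent with the mesh constraint rather than demanding a smallness assumption on $\delta$.
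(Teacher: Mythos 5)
Your proposal is correct and follows essentially the same route as the paper's proof: testing \eqref{SCH1:m:dw1:v2} with $-\Delta_h^{-1}\u^{n+1}_h$ and \eqref{SCH1:m:dw2:v2} with $\u^{n+1}_h$, using the polarization identity \eqref{stab:2}, handling the noise term exactly as in \eqref{stab:4} by splitting off $\u^{n+1}_h-\u^n_h$, integrating by parts via the divergence-free $X$ with $X\cdot n=0$, exploiting independence of $\bar{\Delta}W_{n+1}$, interpolating $\|\cdot\|_{L^2}^2\leq\|\cdot\|_{-1,h}\|\nabla\cdot\|_{L^2}$, and closing with the discrete Gronwall inequality under \eqref{mesh-constraint}. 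The only cosmetic difference is that you take expectations before interpolating the $C\delta^2(\bar{\Delta}W_{n+1})^2\|\u^n_h\|_{L^2(\D)}^2$ term while the paper interpolates pathwise first, which changes nothing.
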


\begin{proof}
It suffices to prove the estimates for the solution to \eqref{SCH1:m:dw1:v2}--\eqref{SCH1:m:dw2:v2}. Taking $\eta_h = - \Delta^{-1}_h \u^{n+1}_h$ in \eqref{SCH1:m:dw1:v2} and $v_h = \u^{n+1}_h$ in \eqref{SCH1:m:dw2:v2}, we have 
\begin{align}\label{stab:1}
& (\u^{n+1}_h - \u^{n}_h, - \Delta^{-1}_h \u^{n+1}_h) + \tau \epsilon \| \nabla \u^{n+1}_h \|^2_{L^2(\D)} \\
& \qquad = - \tau \frac{\delta^2}{2} \bigl(\nabla \u^{n+1}_h \cdot X, \nabla (- \Delta^{-1}_h \u^{n+1}_h) \cdot X \bigr)  \notag \\
& \quad\qquad\quad  + \delta \bigl(\nabla \u^n_h \cdot X \bar{\Delta} W_{n+1}, - \Delta^{-1}_h \u^{n+1}_h\bigr) - \frac{\tau}{\epsilon} 
\bigl( \hat{f}^{n+1}, \u^{n+1}_h \bigr). \notag
\end{align} 

Notice that 
\begin{align}\label{stab:2}
\bigl(\u^{n+1}_h - \u^{n}_h, - \Delta^{-1}_h \u^{n+1}_h \bigr) = \frac12 \| \u^{n+1}_h \|^2_{-1,h} - \frac12 \| \u^{n}_h \|^2_{-1,h} + \frac12 \| \u^{n+1}_h - \u^{n}_h \|^2_{-1,h}, 
\end{align}
and the right-hand side of \eqref{stab:1} can be estimated as follows: 
\begin{align}\label{stab:3}
 - \tau \frac{\delta^2}{2} & \bigl(\nabla \u^{n+1}_h \cdot X, \nabla (- \Delta^{-1}_h \u^{n+1}_h ) \cdot X \bigr) \\
& \qquad \qquad \leq \frac{C \delta^4 \tau}{\epsilon} \| \u^{n+1}_h \|^2_{-1,h} + \frac{\epsilon \tau}{16} \| \nabla \u^{n+1}_h \|^2_{L^2(\D)}, \notag
\end{align}
and 
\begin{align}\label{stab:4}
&\delta \bigl(\nabla \u^n_h \cdot X \bar{\Delta} W_{n+1}, - \Delta^{-1}_h \u^{n+1}_h \bigr) \\
&= - \delta \bigl(\u^n_h X \bar{\Delta} W_{n+1}, \nabla (- \Delta^{-1}_h (\u^{n+1}_h - \u^{n}_h) ) \bigr)  
- \delta \bigl(\u^n_h X \bar{\Delta} W_{n+1}, \nabla (- \Delta^{-1}_h \u^n_h) \bigr) \notag \\
&\leq  C \delta^2 \| \u^n_h \|^2_{L^2(\D)} (\bar{\Delta} W_{n+1})^2 + \frac12 \| \u^{n+1}_h - \u^{n}_h \|^2_{-1,h} \notag \\
& \qquad - \delta \bigl(\u^n_h X \bar{\Delta} W_{n+1}, \nabla (- \Delta^{-1}_h \u^n_h) \bigr) \notag \\
&\leq  \frac{\epsilon}{16} \| \nabla \u^{n}_h \|^2_{L^2(\D)} (\bar{\Delta} W_{n+1})^2 + \frac{C \delta^4}{\epsilon} \| \u^{n}_h \|^2_{-1,h} (\bar{\Delta} W_{n+1})^2 + \frac12 \| \u^{n+1}_h - \u^{n}_h \|^2_{-1,h} \notag \\
& \qquad - \delta \bigl(\u^n_h X \bar{\Delta} W_{n+1}, \nabla (- \Delta^{-1}_h \u^n_h) \bigr)\notag
\end{align}
by integration by parts and \eqref{eq3.5}. Moreover, it follows from \eqref{eq3.5} that 
\begin{align}\label{stab:5}
- \frac{\tau}{\epsilon} \bigl( \hat{f}^{n+1}, \u^{n+1}_h \bigr) &\leq - \frac{\tau}{2 \epsilon} \| \u^{n+1}_h \|^4_{L^4(\D)} - \frac{3 \tau}{\epsilon} \bar{u}_0^2 \| \u^{n+1}_h \|^2_{L^2(\D)} + \frac{C \tau}{\epsilon} \| \u^{n+1}_h \|^2_{L^2(\D)} \\
& \leq \frac{\epsilon \tau}{16} \| \nabla \u^{n+1}_h \|^2_{L^2(\D)} + \frac{C \tau}{\epsilon^3} \| \u^{n+1}_h \|^2_{-1,h}.  \notag 
\end{align}

Taking the expectation on both sides of \eqref{stab:1}, summing over $n = 0, 1, ..., \ell-1$ with $1 \leq \ell \leq N$, using \eqref{stab:2}--\eqref{stab:5} and the fact that
\[
\Eb{\delta (\u^n_h X \bar{\Delta} W_{n+1}, \nabla (- \Delta^{-1}_h \u^n_h))} = 0, 
\]
we get 
\begin{align}\label{stab:6}
&\Bigl[ \frac12 - C \tau \( \epsilon^{-3} + \delta^4 \epsilon^{-1} \) \Bigr] \Eb{\| \u^{\ell}_h \|^2_{-1,h}} 
+ \frac{\epsilon}{16} \Eb{\sum_{n = 1}^\ell \tau \| \nabla \u^{n}_h \|^2_{L^2(\D)}} \\
& \leq C (\delta^4 \epsilon^{-1} + \epsilon^{-3}) \tau \sum_{n=1}^{\ell-1} \Eb{\| \u^n_h \|^2_{-1,h}} + \frac12 \Eb{\| u^0_h \|^2_{-1,h}}   + \frac{\epsilon \tau}{16} \Eb{\| \nabla \u^0_h \|^2_{L^2(\D)}}. \notag 
\end{align}

Finally, \eqref{stability:-1h} follows from applying the discrete Gronwall inequality to \eqref{stab:6}. The proof is complete.
\end{proof}

\section{Strong convergence analysis}\label{sec:anal}
The goal of this section is to establish the strong convergence with rates for the fully discrete mixed finite element 
method defined in the previous section. To the end, we introduce for $n=0,1,2,...,N$,
\begin{align*}
&E^n = u(t_n) - u_h^n := \Theta^n+\Phi^n,\\
&\Theta^n := u(t_n) - P_hu(t_n),\qquad\Phi^n := P_hu(t_n)-u_h^n,\\
&G^n = w(t_n) - w_h^n := \Lambda^n+\Psi^n,\\
&\Lambda^n := w(t_n) - P_hw(t_n),\qquad\Psi^n := P_hw(t_n)-w_h^n. 
\end{align*}

With the help of H\"{o}lder continuity estimates derived in Section~\ref{sec:pre}, we are able to prove strong convergence with rates for $E^n$, which is stated in the following theorem. 

 \begin{theorem}\label{thm:errest:1}
Under the mesh constraint \eqref{mesh-constraint}, there holds 
\begin{align}\label{errest:1}
\sup_{0 \leq n \leq N} \Eb{\| E^n \|^2_{-1,h}} + \Eb{\sum_{n = 1}^N \tau \| \nabla E^n \|^2_{L^2(\D)}} \leq C(T, \epsilon^{-1}, \delta) \left( \tau + h^2 \right). 
\end{align}
\end{theorem}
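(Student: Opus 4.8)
The plan is to follow the standard splitting $E^n=\Theta^n+\Phi^n$ and reduce the theorem to an estimate for the discrete component $\Phi^n=P_hu(t_n)-u^n_h$. The projection part $\Theta^n$ is immediately controlled by the approximation estimate \eqref{est:Ph}: since $\Theta^n$ has zero mean, the definition \eqref{eq3.4} together with the Poincar\'e inequality gives $\|\Theta^n\|_{-1,h}\le C\|\Theta^n\|_{L^2(\D)}\le Ch^2\|u(t_n)\|_{H^2(\D)}$, so with \eqref{eq:reg} both $\sup_n\Eb{\|\Theta^n\|_{-1,h}^2}$ and $\Eb{\sum_n\tau\|\nabla\Theta^n\|_{L^2(\D)}^2}$ are of the required order. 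The observation that makes the $\|\cdot\|_{-1,h}$ machinery applicable to $\Phi^n$ is that $\Phi^n\in\mathring{V}_h$: both $P_hu(t_n)$ and $u^n_h$ carry the mean $\bar u_0$ --- the former because $P_h$ preserves the mean and the continuous solution is mass-conserving (a consequence of $\div X=0$), the latter by the sample-wise mass conservation of the scheme --- so $\Phi^n$ has zero mean and $-\Delta_h^{-1}\Phi^n$ is well defined.

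Second, I would derive the error equation for $\Phi^{n+1}-\Phi^n$ by subtracting the scheme \eqref{SCH1:m:dw1}--\eqref{SCH1:m:dw2} from the weak formulation \eqref{SCH1:mIto:w1}--\eqref{SCH1:mIto:w2} integrated over $[t_n,t_{n+1}]$, and test with $\eta_h=-\Delta_h^{-1}\Phi^{n+1}$ in the momentum equation together with $v_h=\Phi^{n+1}$ in the chemical-potential equation, mirroring Theorem~\ref{thm:-1h-stability}. The identity $(\Phi^{n+1}-\Phi^n,-\Delta_h^{-1}\Phi^{n+1})=\tfrac12\|\Phi^{n+1}\|_{-1,h}^2-\tfrac12\|\Phi^n\|_{-1,h}^2+\tfrac12\|\Phi^{n+1}-\Phi^n\|_{-1,h}^2$ produces a telescoping term, while the chemical-potential coupling (via $(\nabla w^{n+1}_h,\nabla(-\Delta_h^{-1}\Phi^{n+1}))=(w^{n+1}_h,\Phi^{n+1})$ and \eqref{SCH1:m:dw2}) produces the coercive $\tau\epsilon\|\nabla\Phi^{n+1}\|_{L^2(\D)}^2$, exactly as in \eqref{stab:2}. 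The $\tfrac{\delta^2}{2}$ gradient-drift term and the $-\tfrac{\tau}{\epsilon}\Phi^{n+1}$ contribution are absorbed into the coercive and $\|\cdot\|_{-1,h}$ terms precisely as in \eqref{exist:6}--\eqref{exist:7} and \eqref{stab:3}, at the cost of the factor $C\tau(\epsilon^{-3}+\delta^4\epsilon^{-1})$ that drives the mesh constraint \eqref{mesh-constraint}.

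Third, the right-hand side splits into a nonlinear term, a stochastic term, and time-consistency residuals. For the nonlinear term I would use the monotonicity inequality $(a^3-b^3)(a-b)\ge0$, as in \eqref{unique:2} and \eqref{stab:5}, so that the leading cubic contribution has a favorable sign, leaving only the lower-order $-u$ piece and the projection-induced cross terms, which are controlled by Young's inequality and the $W^{s,\infty}$-moment bounds \eqref{eq:reg}. The stochastic term $\delta(\nabla u^n_h\cdot X\,\bar{\Delta}W_{n+1},-\Delta_h^{-1}\Phi^{n+1})$ is handled by splitting $\Phi^{n+1}=(\Phi^{n+1}-\Phi^n)+\Phi^n$: the $\Phi^n$ part is $\F_{t_n}$-measurable and vanishes in expectation against $\bar{\Delta}W_{n+1}$, while the difference part is absorbed by $\tfrac12\|\Phi^{n+1}-\Phi^n\|_{-1,h}^2$ plus a variance term of size $C\delta^2\|\cdot\|^2(\bar{\Delta}W_{n+1})^2$, exactly as in \eqref{stab:4}. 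The time-consistency residuals --- from replacing $\int_{t_n}^{t_{n+1}}\nabla w(s)\,ds$ by $\tau\nabla w(t_{n+1})$, $\int_{t_n}^{t_{n+1}}\nabla u(s)\cdot X\,ds$ by $\tau\nabla u^{n+1}_h\cdot X$, and the It\^o integral $\int_{t_n}^{t_{n+1}}\nabla u(s)\cdot X\,dW_s$ by $\nabla u^n_h\cdot X\,\bar{\Delta}W_{n+1}$ --- are where the H\"older lemmas enter: Lemma~\ref{lem20180714_2} gives $\Eb{\|\nabla w(s)-\nabla w(t_{n+1})\|_{L^2(\D)}^2}\le C_2\tau$ and Lemma~\ref{lem20180714_1} gives the analogous bound for $\nabla u$, so each residual contributes $O(\tau^2)$ per step, hence $O(\tau)$ after summation.

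Finally, I would take expectations, sum over $n=0,\dots,\ell-1$, absorb the accumulated $\tfrac12\|\Phi^{n+1}-\Phi^n\|_{-1,h}^2$ terms, and apply the discrete Gronwall inequality under the mesh constraint \eqref{mesh-constraint}, which keeps the coefficient $[\tfrac12-C\tau(\epsilon^{-3}+\delta^4\epsilon^{-1})]$ of $\Eb{\|\Phi^\ell\|_{-1,h}^2}$ bounded below by a positive constant. Combining the resulting bound for $\Phi^n$ with the projection estimates for $\Theta^n$ via the triangle inequality then yields \eqref{errest:1}. I expect the main obstacle to be the stochastic It\^o-integral consistency term: unlike the deterministic drift residuals, it mixes the temporal regularity gap (controlled by Lemma~\ref{lem20180714_1} through the It\^o isometry) with the spatial projection error, and it must be reorganized so that the genuine martingale part drops out in expectation while the remainder stays at the sharp order $O(\tau+h^2)$ rather than degrading the rate.
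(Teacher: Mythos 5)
Your proposal is correct and follows essentially the same route as the paper's proof: the same splitting $E^n=\Theta^n+\Phi^n$, the same test functions $\eta_h=-\Delta_h^{-1}\Phi^{n+1}$ and $v_h=\Phi^{n+1}$, the monotonicity argument for the cubic nonlinearity combined with the moment bounds \eqref{eq:reg}, the H\"{o}lder continuity lemmas (Lemmas~\ref{lem20180714_1} and \ref{lem20180714_2}) for the time-consistency residuals, and the martingale/It\^{o}-isometry splitting $\Phi^{n+1}=(\Phi^{n+1}-\Phi^n)+\Phi^n$ for the noise terms, concluded by discrete Gronwall under \eqref{mesh-constraint}. Even the obstacle you flag at the end is resolved in the paper exactly as you anticipate, in the estimates of the terms $T_7$ and $T_8$ in \eqref{errq:16}--\eqref{errq:17}.
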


\begin{proof}
Subtracting \eqref{SCH1:m:dw1}--\eqref{SCH1:m:dw2} from \eqref{SCH1:mIto:w1}--\eqref{SCH1:mIto:w2} 
after substituting $0$ by $t_n$, and $t$ by $t_{n+1}$, we get $\mathbb{P}$-almost surely
\begin{align}
(E^{n+1},\eta_h)_{} &=  (E^n,\eta_h)_{} - \int_{t_n}^{t_{n+1}} (\nabla w(s)-\nabla w_h^{n+1}, \nabla \eta_h)_{} \, ds  \label{errq:1}\\
& \qquad - \frac{\delta^2}{2} \int_{t_n}^{t_{n+1}} \big( (\nabla u(s)-\nabla u_h^{n+1}) \cdot X, \nabla \eta_h \cdot X \big)_{} \, ds \notag\\
&\qquad+ \delta \int_{t_n}^{t_{n+1}} \big( (\nabla u(s)-\nabla u_h^n) \cdot X, \eta_h \big)_{} d W_s \qquad \forall \, \eta_h \in V_h, \notag\\
(G^{n+1}, v_h)_{} &= \epsilon (\nabla E^{n+1}, \nabla v_h)_{} + \frac{1}{\epsilon} \big( f(u(t_{n+1})) - f^{n+1}, v_h \big)_{}  \qquad \forall \, v_h \in V_h.\label{errq:2}
\end{align}

Since $\Phi^{n+1}(\omega) \in \mathring{V}_h$, setting $\eta_h = - \Delta^{-1}_h \Phi^{n+1} (\omega)$ in \eqref{errq:1} and $v_h = \tau \Phi^{n+1}(\omega)$ in \eqref{errq:2}, it follows from the definition of $\Delta^{-1}_h$ (cf. \eqref{eq:distInvLap}) that  
\begin{align}
\label{errq:3}
&(\Phi^{n+1} - \Phi^n, \Phi^{n+1})_{-1,h} = (\Theta^{n+1} - \Theta^n, \Delta^{-1}_h \Phi^{n+1}) - \tau (\Lambda^{n+1}, \Phi^{n+1}) \\
& \qquad \quad  - \tau (\Psi^{n+1}, \Phi^{n+1}) + \int_{t_n}^{t_{n+1}} (\nabla w(s) - \nabla w(t_{n+1}), \nabla \Delta^{-1}_h \Phi^{n+1}) \, ds \notag \\
& \qquad \quad + \frac{\delta^2}{2} \int_{t_n}^{t_{n+1}} (\nabla (\Theta^{n+1} + \Phi^{n+1}) \cdot X, \nabla \Delta^{-1}_h \Phi^{n+1} \cdot X) \, ds \notag \\
& \qquad \quad + \frac{\delta^2}{2} \int_{t_n}^{t_{n+1}} ( \nabla (u(s) - u(t_{n+1}) \cdot X ), \nabla \Delta^{-1}_h \Phi^{n+1} \cdot X) \, ds \notag \\
& \qquad \quad - \delta \int_{t_n}^{t_{n+1}} (\nabla (\Theta^n + \Phi^n) \cdot X, \Delta^{-1}_h \Phi^{n+1}) \, d W_s \notag \\
& \qquad \quad - \delta \int_{t_n}^{t_{n+1}} (\nabla (u(s) - u(t_n)) \cdot X, \Delta^{-1}_h \Phi^{n+1}) \, d W_s, \notag \\
& \tau (\Lambda^{n+1} + \Psi^{n+1}, \Phi^{n+1}) = \epsilon \tau (\nabla \Theta^{n+1}, \nabla \Phi^{n+1}) + \epsilon \tau (\nabla \Phi^{n+1}, \nabla \Phi^{n+1}) \label{errq:4} \\
& \qquad \quad + \tau \frac{1}{\epsilon} \big( f(u(t_{n+1})) - f^{n+1}, \Phi^{n+1} \big). \notag
\end{align}

Combining \eqref{errq:3} and \eqref{errq:4}, and taking expectation on both sides, we have 
\begin{align}\label{errq:5}
\E &\left[ (\Phi^{n+1} - \Phi^n, \Phi^{n+1})_{-1,h} \right] + \epsilon \tau \E \left[ (\nabla \Phi^{n+1}, \nabla \Phi^{n+1}) \right] \\
&\qquad  = \E \left[ (\Theta^{n+1} - \Theta^n, \Delta^{-1}_h \Phi^{n+1}) \right] \notag \\
& \qquad \qquad - \epsilon \tau \E \left[ (\nabla \Theta^{n+1}, \nabla \Phi^{n+1}) \right] - \tau \frac{1}{\epsilon} \E \left[ \big( f(u(t_{n+1})) - f^{n+1}, \Phi^{n+1} \big) \right] \notag \\
& \qquad \qquad + \E \left[ \int_{t_n}^{t_{n+1}} (\nabla w(s) - \nabla w(t_{n+1}), \nabla \Delta^{-1}_h \Phi^{n+1}) \, ds \right] \notag \\
& \qquad \qquad + \frac{\delta^2}{2} \E \left[  \int_{t_n}^{t_{n+1}} (\nabla (\Theta^{n+1} + \Phi^{n+1}) \cdot X, \nabla \Delta^{-1}_h \Phi^{n+1} \cdot X) \, ds \right] \notag \\
& \qquad \qquad + \frac{\delta^2}{2} \E \left[  \int_{t_n}^{t_{n+1}} ( \nabla (u(s) - u(t_{n+1})) \cdot X , \nabla \Delta^{-1}_h \Phi^{n+1} \cdot X) \, ds \right] \notag \\
& \qquad \qquad - \delta \E \left[ \int_{t_n}^{t_{n+1}} (\nabla (\Theta^n + \Phi^n) \cdot X, \Delta^{-1}_h \Phi^{n+1}) \, d W_s \right] \notag \\
& \qquad \qquad - \delta \E \left[ \int_{t_n}^{t_{n+1}} (\nabla (u(s) - u(t_n)) \cdot X, \Delta^{-1}_h \Phi^{n+1}) \, d W_s \right] \notag \\
& \qquad := \sum_{i = 1}^8 T_i. \notag 
\end{align}

The left-hand side of \eqref{errq:5} can be rewritten as 
\begin{align}\label{errq:6}
\E &\left[ (\Phi^{n+1} - \Phi^n, \Phi^{n+1})_{-1,h} \right] + \epsilon \tau \E \left[ (\nabla \Phi^{n+1}, \nabla \Phi^{n+1}) \right] \\
&\quad = \frac12 \left( \E \left[ \| \Phi^{n+1} \|_{-1,h}^2 \right] - \E \left[ \| \Phi^n \|_{-1,h}^2 \right] \right) + \frac12 \E \left[ \| \Phi^{n+1} - \Phi^n \|_{-1,h}^2 \right] \notag \\
&\qquad \quad + \epsilon \tau \E \left[  \| \nabla \Phi^{n+1} \|^2_{L^2(\D)} \right]. \notag 
\end{align}

Now we estimate the right-hand side of \eqref{errq:5}. Since $P_h$ is the $L^2$-projection operator, we have 
\begin{align}\label{errq:7}
T_1 = 0.
\end{align}

For the second term on the right-hand side of \eqref{errq:5}, we have by \eqref{est:Ph} that 
\begin{align}\label{errq:8}
T_2 &\leq \frac{\epsilon}{2} \tau \E \left[ \| \nabla \Theta^{n+1} \|^2_{L^2(\D)} \right] + \frac{\epsilon}{2} \tau \E \left[ \| \nabla \Phi^{n+1} \|^2_{L^2(\D)} \right] \\
& \leq \frac{\epsilon}{2} \tau h^2 \E \left[ |u(t_{n+1})|^2_{H^2(\D)} \right] + \frac{\epsilon}{2} \tau \E \left[ \| \nabla \Phi^{n+1} \|^2_{L^2(\D)} \right]. \notag 
\end{align} 

For the third term on the right-hand side of \eqref{errq:5}, we observe that 
\begin{align}\label{errq:9}
T_3 &= - \tau \frac{1}{\epsilon} \E \left[ \big( f(u(t_{n+1})) - f(P_h(u(t_{n+1}))), \Phi^{n+1} \big) \right] \\
& \qquad - \tau \frac{1}{\epsilon} \E \left[ \big( f(P_h(u(t_{n+1}))) - f^{n+1}, \Phi^{n+1} \big) \right]. \notag 
\end{align}
First of all, we have 
\begin{align}\label{errq:10}
- \tau \frac{1}{\epsilon} \E \left[ \big( f(P_h(u(t_{n+1}))) - f^{n+1}, \Phi^{n+1} \big) \right] \leq \tau \frac{1}{\epsilon} \E \left[ \| \Phi^{n+1} \|^2_{L^2(\D)} \right] 
\end{align}
by the monotonicity property of the nonlinearity. Secondly, we can estimate the first term on the right-hand 
side of \eqref{errq:9} by 
\begin{align}\label{errq:11}
- &\frac{\tau}{\epsilon} \E \left[ \bigl( f(u(t_{n+1}) - f(P_h u(t_{n+1})), \Phi^{n+1} \bigr) \right]   \\
&  = - \frac{\tau}{\epsilon} \E \left[ \bigl( \Theta^{n+1} (u(t_{n+1})^2+u(t_{n+1})P_h u(t_{n+1})+P_h u(t_{n+1})^2-1),\Phi^{n+1} \bigr) \right]  \notag\\
& \leq \frac{\tau}{4 \epsilon} \E \Bigl[ \|u(t_{n+1})^2+u(t_{n+1})P_h u(t_{n+1})+P_h u(t_{n+1})^2-1\|_{L^{\infty}(\D)}^2 \notag \\
&\qquad   \times \|\Theta^{n+1}\|^2_{L^2(\D)} \Bigr] + \E \left[ \|\Phi^{n+1}\|^2_{L^2(\D)} \right] \notag \\
&  \leq \frac{C \tau}{4 \epsilon} \Bigl( \E \left[ \|P_h u(t_{n+1})\|_{L^\infty(\D)}^{6}
+\|u(t_{n+1})\|_{L^\infty(\D)}^{6}+|D|^{3} \right] \Bigr)^{\frac{2}{3}} \notag \\
&\qquad   \times \Bigl( \E \left[\|\Theta^{n+1}\|^{6}_{L^2(\D)} \right] \Bigr)^{\frac13} 
+ \frac{\tau}{\epsilon} \E \left[ \|\Phi^{n+1}\|^2_{L^2(\D)} \right] \notag \\
& \leq \frac{C \tau}{4 \epsilon} \left( \E \left[\|\Theta^{n+1}\|^{6}_{L^2(\D)} \right] \right)^{\frac13} 
+ \frac{\tau}{\epsilon} \E \left[ \|\Phi^{n+1}\|^2_{L^2(\D)} \right], \notag
\end{align}
where we had applied \eqref{est:Ph2} and high moment bounds \eqref{eq:reg} for the SPDE solution. Combining \eqref{errq:9}--\eqref{errq:11}, \eqref{est:Ph} and \eqref{eq3.5}, we obtain 
\begin{align}\label{errq:12}
T_3 &\leq \frac{C\tau}{\epsilon^3} \E \left[ \|\Phi^{n+1}\|_{-1,h}^2 \right] + \frac{\epsilon \tau}{4} \E \left[ \| \nabla \Phi^{n+1} \|^2_{L^2(\D)} \right] + \frac{C \tau}{\epsilon} \left( \E \left[\|\Theta^{n+1}\|^{6}_{L^2(\D)} \right] \right)^{\frac13} \\
&\leq  \frac{C\tau}{\epsilon^3} \E \left[ \|\Phi^{n+1}\|_{-1,h}^2 \right] + \frac{\epsilon \tau}{4} \E \left[ \| \nabla \Phi^{n+1} \|^2_{L^2(\D)} \right]  \notag \\
&\qquad    + \frac{C \tau h^4}{\epsilon} \left( \E \left[| u(t_{n+1}) |_{H^2(\D)}^{6} \right] \right)^{\frac13}. \notag 
\end{align}

For the fourth term on the right-hand side of \eqref{errq:5}, we have 
\begin{align}\label{errq:13}
T_4 &\leq \E \left[ \int_{t_n}^{t_{n+1}} 2 \| \nabla w(s) - \nabla w(t_{n+1}) \|^2_{L^2(\D)} + \frac18 \| \nabla \Delta^{-1}_h \Phi^{n+1} \|^2_{L^2(\D)} \, ds \right] \\
&\leq C \tau^2 + \frac18 \tau \E \left[ \| \Phi^{n+1} \|_{-1,h}^2 \right] \notag 
\end{align}
by the H\"{o}lder continuity for $\nabla w$ (cf. Lemma~\ref{lem20180714_2}). Similarly, we have 
\begin{align}\label{erreq:14}
T_6 &\leq C \delta^2 \tau^2 + \frac18 \tau \Eb{\| \Phi^{n+1} \|_{-1,h}^2}
\end{align}
by the H\"{o}lder continuity for $\nabla u$ (cf. Lemma~\ref{lem20180714_1}). 

For the fifth term on the right-hand side of \eqref{errq:5}, we have 
\begin{align}\label{errq:15}
T_5 &\leq C \delta^4 \tau h^2 + \frac{\epsilon \tau}{8} \Eb{\| \nabla \Phi^{n+1} \|^2_{L^2(\D)}} + \left( \frac18 + \frac{C \delta^4}{\epsilon} \right) \tau \Eb{\| \Phi^{n+1} \|_{-1,h}^2}. 
\end{align}

For the seventh term on the right-hand side of \eqref{errq:5}, we have by the integration by parts, the martingale property, the It\^{o} isometry and \eqref{eq3.5} that 
\begin{align}\label{errq:16}
T_7 &= - \delta \E \left[ \int_{t_n}^{t_{n+1}} \big( (\Theta^n + \Phi^n) X, \nabla \Delta^{-1}_h (\Phi^{n+1} - \Phi^n)\big) \, d W_s \right] \\
&\leq C \delta^2 \tau h^4 \Eb{\| u(t_n) \|^2_{H^2(\D)}} + C \delta^2 \tau \Eb{\| \Phi^n X \|^2_{L^2(\D)}} \notag \\
&\qquad + \frac14 \Eb{\| \Phi^{n+1} - \Phi^n \|_{-1,h}^2} \notag \\
&\leq C \delta^2 \tau h^4 \Eb{\| u(t_n) \|^2_{H^2(\D)}} + \frac14 \Eb{\| \Phi^{n+1} - \Phi^n \|_{-1,h}^2} \notag \\
&\qquad + \frac{\epsilon \tau}{16} \Eb{\| \nabla \Phi^n \|^2_{L^2(\D)}} + \frac{C \delta^4 \tau}{\epsilon} \Eb{\| \Phi^n \|^2_{-1,h}}. \notag 
\end{align}

Similarly, we have 
\begin{align}\label{errq:17}
T_8 &= - \delta \E \left[ \int_{t_n}^{t_{n+1}} \big( \nabla (u(s) - u(t_n)) \cdot X, \Delta^{-1}_h (\Phi^{n+1} - \Phi^n) \big) \, d W_s \right] \\
&\leq C \delta^2 \tau^2 + \frac14 \Eb{\| \Phi^{n+1} - \Phi^n \|_{-1,h}^2},  \notag 
\end{align}
where we have used Lemma~\ref{lem20180714_1} and the following Poincar\'{e}'s inequality:
\[
\| \Delta^{-1}_h (\Phi^{n+1} - \Phi^n) \big) \|^2_{L^2(\D)} \leq C \| \nabla \Delta^{-1}_h (\Phi^{n+1} - \Phi^n) \big) \|^2_{L^2(\D)} = C \| \Phi^{n+1} - \Phi^n \|^2_{-1,h}.
\]
 
Combining \eqref{errq:5}--\eqref{errq:8} and \eqref{errq:12}--\eqref{errq:17}, summing over $n = 0, 1, ..., \ell-1$ with $1 \leq \ell \leq N$, we have 
\begin{align}\label{errq:18}
\left( \frac18 - \frac{C \tau}{\epsilon^3} - \frac{C \delta^4 \tau}{\epsilon} \right) &\Eb{\| \Phi^\ell \|^2_{-1,h}} + \frac{\epsilon}{16} \Eb{\tau \sum_{n = 1}^\ell \| \nabla \Phi^n \|^2_{L^2(\D)}} \\
&\leq \frac12 \Eb{\| \Phi^0 \|^2_{-1,h}} + \frac{\epsilon}{16} \tau \Eb{\| \nabla \Phi^0 \|^2_{L^2(\D)}} \notag \\
&\qquad + C(\epsilon^{-1}, \delta) T (\tau + h^2 + h^4) \notag \\
&\qquad + C \left( 1 + \frac{1}{\epsilon^3} + \frac{\delta^4}{\epsilon} \right) \tau \sum_{n=1}^{\ell-1} \Eb{\| \Phi^n \|^2_{-1,h}}. \notag 
\end{align}
Therefore, under the mesh constraint \eqref{mesh-constraint}, we have by the discrete Gronwall inequality that 
\begin{align}\label{errq:21}
&\Eb{\| \Phi^\ell \|^2_{-1,h}} + \Eb{\tau \sum_{n = 1}^\ell \| \nabla \Phi^n \|^2_{L^2(\D)}} \\
&\quad  \leq C \Bigl( \Eb{\| \Phi^0 \|^2_{-1,h}} + \frac{\epsilon}{16} \tau \Eb{\| \nabla \Phi^0 \|^2_{L^2(\D)}} + \tau + h^2 \Bigr)   e^{CT(1 + \epsilon^{-3} + \delta^4 \epsilon^{-1})}. \notag 
\end{align}

Finally, the estimate \eqref{errest:1} follows from \eqref{errq:21}, the triangle inequality, and the fact that $\Phi^0 = 0$.  
The proof is complete.
\end{proof}


\begin{remark}
The error estimates in Theorem~\ref{thm:errest:1} is sub-optimal with respect to the $\| \cdot \|_{-1,h}$-seminorm,
this is due to the existence of the gradient-type noise, hence, the estimate is sharp in general. 
Numerical results in Section~\ref{sec:numer} indeed confirm the sub-optimal convergence 
whenever the noise is relatively large. However, the error is optimal in the $H^1$-seminorm which is also
confirmed by the numerical experiments. 
\end{remark}

\begin{remark}
Because the discrete Gronwall inequality was employed near the end of the proof, the error estimate 
in Theorem~\ref{thm:errest:1} depends on $\frac{1}{\epsilon}$ exponentially. 
We note that a polynomial order dependence on $\frac{1}{\epsilon}$ of the errors was achieved in the deterministic 
case (cf. \cite{Feng_Prohl04,Feng_Prohl05,feng2016analysis}) by using a PDE spectrum 
estimate result, however, such a spectrum estimate is yet proved to hold in the stochastic setting. 
\end{remark}

\section{Numerical experiments}\label{sec:numer}
In this section, we report several numerical examples to check the performance of 
the proposed fully discrete mixed finite element method and numerically study the impact of noise 
on the evolution of the solution and the stochastic Hele-Shaw flow. 

We consider the SPDE \eqref{Msch1:mIto}--\eqref{Msch4:mIto} on the square domain $\D = [-1, 1]^2$ and choose 
\begin{align*}
X = \varphi(r) [x_2, -x_1]^T, \qquad 
\varphi(r) =
\begin{cases}
e^{-\frac{0.001}{0.64 - r^2}}, & \text{if } \ r < 0.8, \\
0, & \text{if } \  r \geq 0.8, 
\end{cases}
\end{align*}
where $r  = |x|$. 
It is clear that $\text{div} X = 0$ in $\D$ and $X \cdot n = 0$ on $\partial \D$. 

Let $N_h = \text{dim} V_h$ and $\{ \psi_i \}_{i=1}^{N_h}$ be the nodal basis of $V_h$. 
Denote by ${\bf u}^{n+1}$ (resp., ${\bf w}^{n+1}$) the coefficient vector of the discrete solution 
$u^{n+1}_h = \sum_{i = 0}^{N_h} u^{n+1}_i \psi_i$ (resp., $w^{n+1}_h = \sum_{i = 0}^{N_h} w^{n+1}_i \psi_i$) 
at time $t_{n+1} = (n+1) \tau$, $n=0,..., N-1$. Then \eqref{SCH1:m:dw1}--\eqref{SCH1:m:dw2} are equivalent to
\begin{align}\label{eq:numsyst1}
\left[ {{\bf M}} + \tau \frac{\delta^2}{2} {{\bf A_X}} \right] {{\bf u}^{n+1}} + \tau {\bf A} {\bf w}^{n+1} &= {\bf M} {\bf u}^{n} + \delta \bar{\Delta} {{\bf W}_{n+1}} {\bf C_X} {\bf u}^n, \\
{\bf M} {\bf w}^{n+1} - \epsilon {\bf A} {\bf u}^{n+1} &= \frac{1}{\epsilon} {\bf N} ({\bf u}^{n+1}), \label{eq:numstyst2}
\end{align}
where $\bf{M}$ and $\bf{A}$ denote respectively the mass and stiffness matrices,  $\bf{A_X}$ is the 
weighted stiffness matrix with $({\bf A_X} )_{ij} = (\nabla \psi_j \cdot X, \nabla \psi_i \cdot X)$, 
${\bf{N}} ({{\bf u}^{n+1}})$ is the nonlinear contribution corresponding to the nonlinear term 
$(f^{n+1}, v_h)$, $({\bf{C_X}})_{ij} = (\nabla \psi_j\cdot X, \psi_i)$ and $\bf{W}$ is the discrete Brownian motion 
with increments $\bar{\Delta} {{\bf W}_{n+1}} = {{\bf W}_{n+1}} - {{\bf W}_n}$. 

In all our tests, we use the Brownian motion generated by using step size $\tau_{\text{ref}} = 5 \times 10^{-5}$ 
and compute at least $M = 1000$ Monte Carlo realizations. The first test concerns a smooth initial function, aiming to verify 
the rates of convergence of the proposed method with respect to the temporal mesh size $\tau$ and the spatial mesh size $h$. 
The second and third tests are designed to investigate the influence of the noise intensity $\delta$ and the parameter 
$\epsilon$ on the stochastic evolutions for two different non-smooth initial functions. 

\subsection{Test 1}
In this test we check the rates of convergence of the method \eqref{SCH1:m:dw1}-\eqref{SCH1:m:dw2} 
with a smooth initial function 
\begin{align*}
u_0(x) = x_1^2 (1-x_1)^2 x_2^2 (1-x_2^2). 
\end{align*} 
We examine the errors $\sup_{0 \leq n \leq N} \Eb{\| E^n \|^2_{L^2(\D)}}$ and $\Eb{\sum_{n = 1}^N \tau \| \nabla E^n \|^2_{L^2(\D)}}$, 
where $E^n = u(t_n) - u^n_h$. Since the exact solution is unknown, we approximate the errors by  
\begin{align*}
&\Eb{\| E^n \|^2_{L^2(\D)}} \approx \frac{1}{M} \sum_{n=1}^M \| u^n_{h} - u^n_{\text{ref}} \|^2_{L^2(\D)},  \\
&\Eb{\| \nabla E^n \|^2_{L^2(\D)}} \approx \frac{1}{M} \sum_{n=1}^M \| \nabla (u^n_{h} - u^n_{\text{ref}}) \|^2_{L^2(\D)}. 
\end{align*}
Here $u^n_{\text{ref}}$ refers to a reference solution. 

First we examine the convergence rates in the time discretization by varying $\tau$ with the fixed parameters 
$\epsilon = 0.1$, $\delta = 5$ and $h = 2/2^6$. In Table~\ref{tab:ex1:k-rate}, we use $L^{\infty}(L^2)$ 
(resp., $L^2(H^1)$) to denote the norm (resp., seminorm) corresponding to the square root of the errors 
$\sup_{0 \leq n \leq N} \Eb{\| E^n \|^2_{L^2(\D)}}$ (resp., $\Eb{\sum_{n = 1}^N \tau \| \nabla E^n \|^2_{L^2(\D)}}$). 
We observe the half order convergence rate for the $L^2(H^1)$-error as predicted in Theorem~\ref{thm:errest:1}. 
Note that the $L^{\infty}(L^2)$ error estimate is not available in Theorem~\ref{thm:errest:1}, however it still 
provides us useful information about the accuracy of the numerical method. 

\begin{table}[htb]
\begin{center}
\begin{tabular}{|c|c|c|c|c|}
\hline 
$\tau$     & $L^{\infty}(L^2)$ error      & order  & $L^2(H^1)$ error & order \\ \hline
3.2000E-03  &  3.93E-02  &        &  1.92E-02  &       \\
1.6000E-03  &  1.52E-02  &  1.37  &  8.25E-03  &  1.22 \\
8.0000E-04  &  9.56E-03  &  0.67  &  5.01E-03  &  0.72 \\
4.0000E-04  &  6.59E-03  &  0.54  &  3.27E-03  &  0.61 \\
2.0000E-04  &  4.77E-03  &  0.47  &  2.27E-03  &  0.52 \\
1.0000E-04  &  3.28E-03  &  0.54  &  1.59E-03  &  0.52 \\
\hline 
\end{tabular}
\caption{Test 1: Temporal errors and convergence rates with $\epsilon = 0.1$, $\delta = 5$ and $h = 2/2^6$.}
\label{tab:ex1:k-rate}
\end{center}
\end{table}

Next we investigate convergence in the space discretization by varying $h$ with the fixed parameters 
$\epsilon = 0.1$, $\delta = 25$ and $\tau = 5 \times 10^{-5}$. 
Since $\delta$ is relatively large, we compute $M = 10^4$ realizations. 
From Table~\ref{tab:ex1:h-rate}, we see that the $L^2(H^1)$-error converges with order $1$ 
which is consistent with the theoretical estimate of Theorem~\ref{thm:errest:1}. Also,   the $L^{\infty}(L^2)$-error converges  
with order less than $2$, indicating a sub-optimal convergence in the lower order norm as predicted by Theorem~\ref{thm:errest:1}. 

\begin{table}[htb]
\begin{center}
\begin{tabular}{|c|c|c|c|c|}
\hline
$h$     & $L^{\infty}(L^2)$ error      & order  & $L^2(H^1)$ error & order \\ \hline
5.0000E-01  &  9.30E-02  &        &  3.10E-02  &       \\
2.5000E-01  &  3.47E-02  &  1.42  &  1.94E-02  &  0.67 \\
1.2500E-01  &  1.10E-02  &  1.66  &  9.38E-03  &  1.05 \\
6.2500E-02  &  3.66E-03  &  1.58  &  4.63E-03  &  1.02 \\
\hline
\end{tabular}
\caption{Test 1: Spatial errors and convergence rates with $\epsilon = 0.1$, $\delta = 25$, $\tau = 5 \times 10^{-5}$ and $M = 10^4$.}
\label{tab:ex1:h-rate}
\end{center}
\end{table}

\subsection{Test 2}
In this test, we take the initial function to be  
\begin{align*}
u_0(x) = \tanh \left( \frac{d_0(x)}{\sqrt{2} \epsilon} \right),
\end{align*}
where $d_0(x)$ represents the signed distance function to the ellipse 
\begin{align*}
\frac{x_1^2}{0.36} + \frac{x_2^2}{0.04} = 1.
\end{align*}

First, we investigate the evolution of the zero-level set with respect to the noise intensity 
$\delta$ with fixed $\epsilon = 0.01$. Figure~\ref{fig:ex2:deltatest_interface} plots snapshots 
at several time points of the zero-level set of $\bar{u}_h$ for $\delta = 1, 5, 10$, where 
\[
\bar{u}_h = \frac{1}{M} \sum_{i=1}^{M} u_h (\omega_i). 
\]
We observe that when the noise is relatively small ($\delta = 1$), the zero-level set is close to the 
deterministic interface (for $\delta = 0$). However, for relatively large noises ($\delta = 5, 10$), the zero-level 
sets rotate and evolve faster. 

\begin{figure}[htb]
\centering
\includegraphics[height=1.8in,width=2.4in]{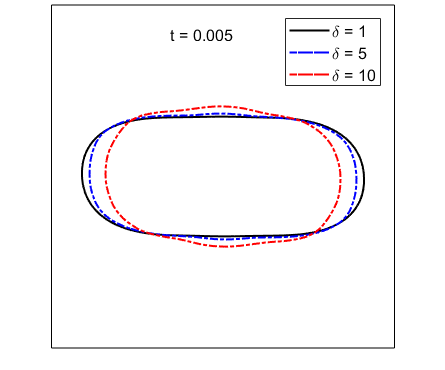}
\includegraphics[height=1.8in,width=2.4in]{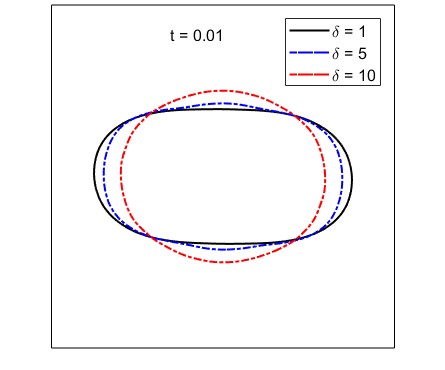}
\includegraphics[height=1.8in,width=2.4in]{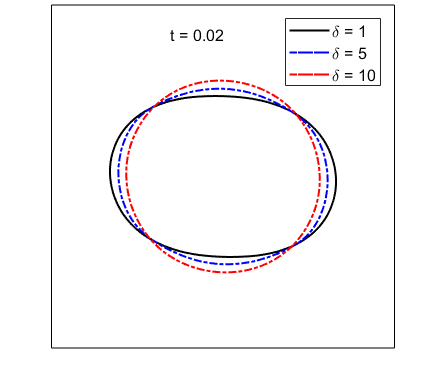}
\includegraphics[height=1.8in,width=2.4in]{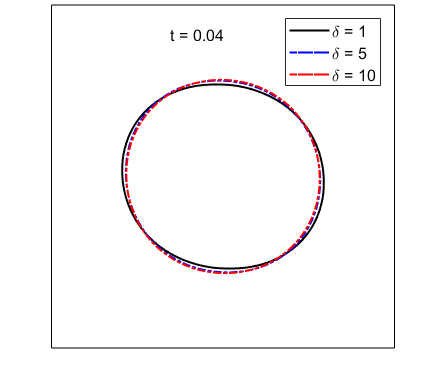}
\caption{Test 2: Snapshots of the zero-level set of $\bar{u}_h $ at several time points with $\delta=1, 5, 10$ and $\epsilon = 0.01$.}
\label{fig:ex2:deltatest_interface}
\end{figure}

Next, we fix $\delta = 1$ and study the influence of the parameter $\epsilon$ on the evolution of the numerical interfaces. In Figure~\ref{fig:ex2:epstest_interface}, snapshots at four fixed time points of the zero-level set of $\bar{u}_h$ are depicted for three different 
$\epsilon = 0.01, 0.015, 0.04$. Numerical results suggest the convergence of the numerical interface to the stochastic Hele-Shaw flow as $\epsilon \to 0$ at each of four time points. In addition, the numerical interface evolves faster in time for larger $\epsilon$. 

\begin{figure}[htb]
\centering
\includegraphics[height=1.8in,width=2.4in]{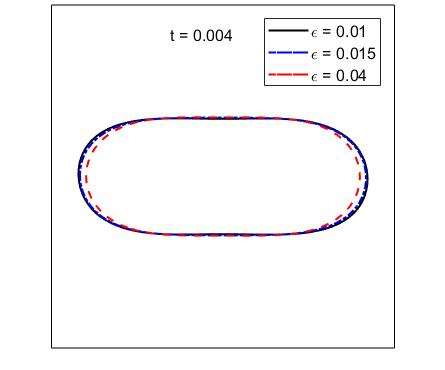}
\includegraphics[height=1.8in,width=2.4in]{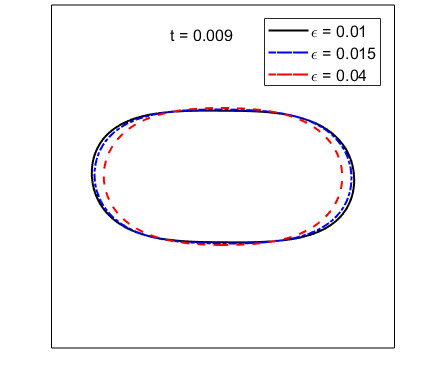}
\includegraphics[height=1.8in,width=2.4in]{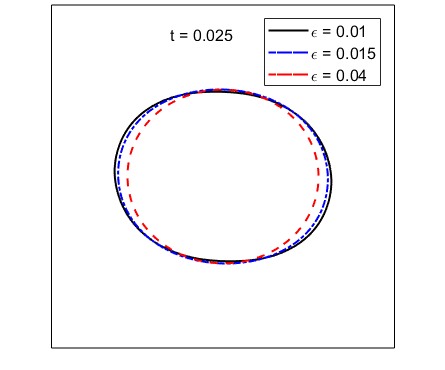}
\includegraphics[height=1.8in,width=2.4in]{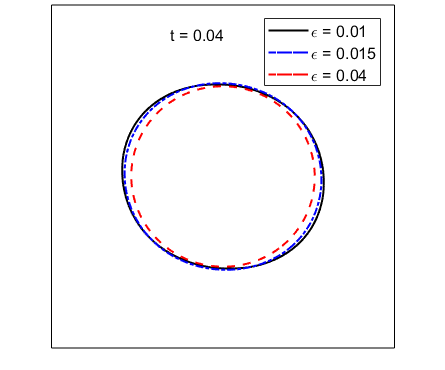}
\caption{Test 2: Snapshots of the zero-level set of $\bar{u}_h $ at several time points with $\epsilon =0.01, 0.015, 0.04$ and $\delta = 1$.}
\label{fig:ex2:epstest_interface}
\end{figure}

Notice that in Figure~\ref{fig:ex2:deltatest_interface}--\ref{fig:ex2:epstest_interface}, we only plot the evolutions 
on the subdomain $[-0.6,0.6]^2$ for a better resolution. 

In Figure~\ref{fig:ex2:energy}, we plot the change of the expected value of the discrete energy
\[
\Eb{J(u^n_h)} \approx \frac{1}{M} \sum_{i=1}^m J(u^n_h(\omega_i))
\]
in time with fixed $\epsilon = 0.01$. 
It indicates that the decay property still holds for $\delta = 1, 5, 10$. 

\begin{figure}[htb]
\centering
\includegraphics[scale=0.5]{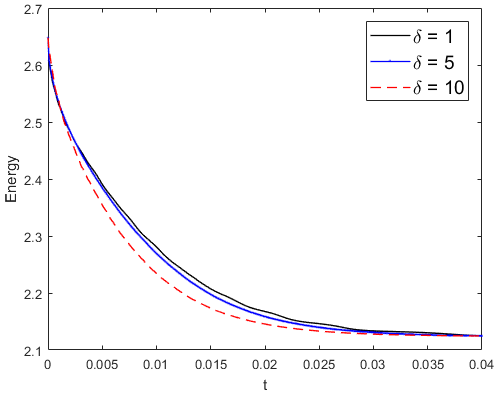}
\caption{Test 2: Decay of the expectation of numerical energy with $\epsilon = 0.01$.}
\label{fig:ex2:energy}
\end{figure}

\subsection{Test 3}

In this test, we consider the case with 
\begin{align*}
u_0(x) = \tanh \left( \frac{d_0(x)}{\sqrt{2} \epsilon} \right),
\end{align*}
where $d_0(x) = \min\{d_1(x), d_2(x)\} $, and $d_1(x)$ and $d_2(x)$ denotes respectively 
the signed distance function to the ellipses 
\begin{align*}
\frac{(x_1+0.2)^2}{0.15^2} + \frac{x_2^2}{0.45^2} = 1 \quad \text{and} \quad \frac{(x_1-0.2)^2}{0.15^2} + \frac{x_2^2}{0.45^2} = 1. 
\end{align*}

In Figure~\ref{fig:ex3:deltatest_interface}, 
we depict snapshots at several time points of the zero-level set 
of $\bar{u}_h$ for $\delta = 1, 5, 10$ with fixed parameter $\epsilon = 0.01$. 
For all cases, the two separated zero-level sets eventually merge and evolve to a circular shape. For larger noise intensity ($\delta = 5, 10$), the two interfaces merge faster and develop two concentric interfaces where the outer interface evolves to a circular shape and the inner interface shrinks and eventually vanishes. 

\begin{figure}[htb]
\centering
\includegraphics[height=1.8in,width=2.4in]{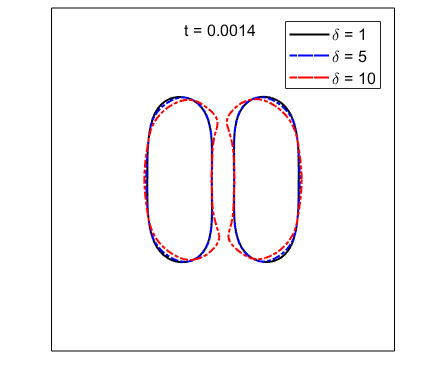}
\includegraphics[height=1.8in,width=2.4in]{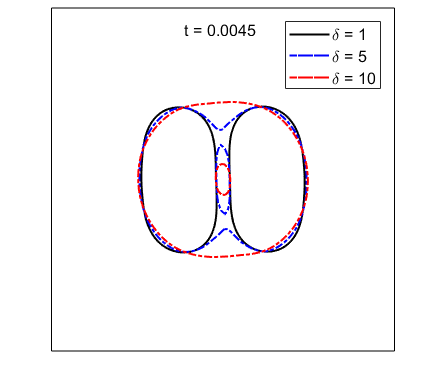}
\includegraphics[height=1.8in,width=2.4in]{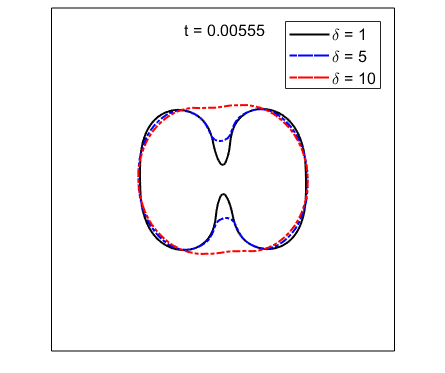}
\includegraphics[height=1.8in,width=2.4in]{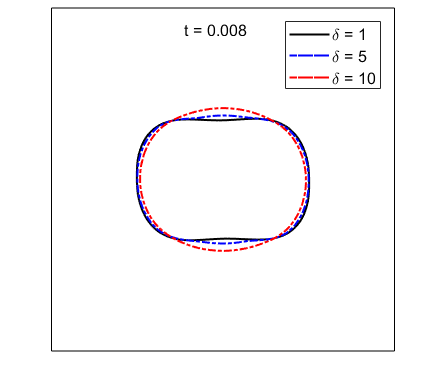}
\caption{Test 3: Snapshots of the zero-level set of $\bar{u}_h $ at several time points with $\delta=1, 5, 10$ and $\epsilon = 0.01$.}
\label{fig:ex3:deltatest_interface}
\end{figure}

Next, we plot a few snapshots of the zero-level set of $\bar{u}_h$ for $\epsilon = 0.01, 0.015, 0.04$ with fixed $\delta = 1$ in Figure~\ref{fig:ex3:epstest_interface}.  Again, the numerical interface evolves faster in time for larger $\epsilon$, and the numerical interfaces stay close for $\epsilon  = 0.01$ and $\epsilon = 0.015$. 
\begin{figure}[htb]
\centering
\includegraphics[height=1.8in,width=2.4in]{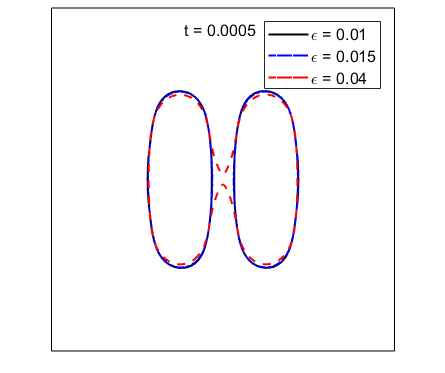}
\includegraphics[height=1.8in,width=2.4in]{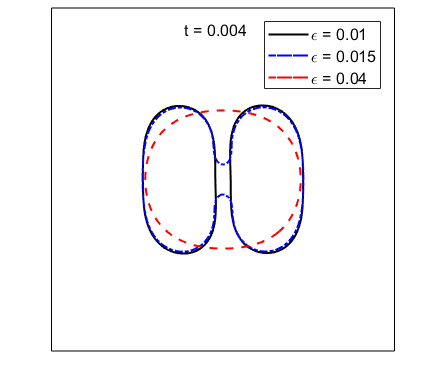}
\includegraphics[height=1.8in,width=2.4in]{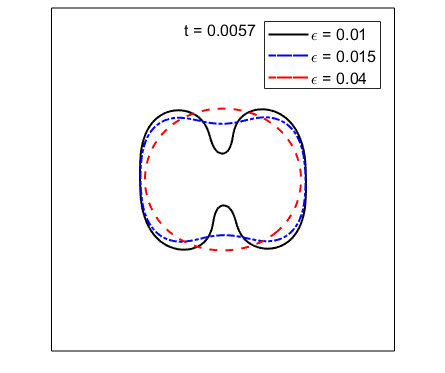}
\includegraphics[height=1.8in,width=2.4in]{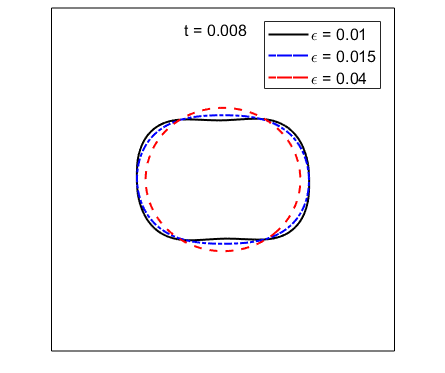}
\caption{Test 3: Snapshots of the zero-level set of $\bar{u}_h $ at several time points with $\epsilon =0.01, 0.015, 0.04$ and $\delta = 1$.}
\label{fig:ex3:epstest_interface}
\end{figure}

The decay of the expected value of the discrete energy is shown in Figure~\ref{fig:ex3:energy}, 
where we consider three noise intensity levels $\delta = 1, 5, 10$ with fixed $\epsilon = 0.01$. 
\begin{figure}[htb]
\centering
\includegraphics[scale=0.5]{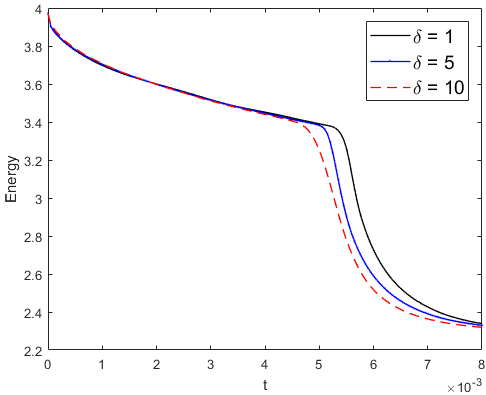}
\caption{Test 3: Decay of the expectation of numerical energy with $\epsilon = 0.01$.}
\label{fig:ex3:energy}
\end{figure}


\end{document}